\newcommand{\R}{\mathbb{R}}
\DeclareMathOperator\conv {conv} 
\DeclareMathOperator\TSP {TSP} 
\DeclareMathOperator\ATSP {ATSP}
\DeclareMathOperator\PSB {PSB}
\newcommand{\lvv}[1]{\reflectbox{\ensuremath{\vv{\reflectbox{\ensuremath{#1}}}}}}
\begin{document}
\title{On vertex adjacencies in the polytope of pyramidal tours with step-backs}
%
%
\author{Andrei Nikolaev\inst{1}\orcidID{0000-0003-4705-2409}}
\authorrunning{A. Nikolaev}
%
\institute{P.G. Demidov Yaroslavl State University, Yaroslavl, Russia
\email{andrei.v.nikolaev@gmail.com}}
\maketitle              
\begin{abstract}
We consider the traveling salesperson problem in a directed graph. 
The pyramidal tours with step-backs are a special class of Hamiltonian cycles for which the traveling salesperson problem is solved by dynamic programming in polynomial time.
The polytope of pyramidal tours with step-backs $\PSB (n)$ is defined as the convex hull of the characteristic vectors of all possible pyramidal tours with step-backs in a complete directed graph.
The skeleton of $\PSB (n)$ is the graph whose vertex set is the vertex set of $\PSB (n)$ and the edge set is the set of geometric edges or one-dimensional faces of $\PSB (n)$.
The main result of the paper is a necessary and sufficient condition for vertex adjacencies in the skeleton of the polytope $\PSB (n)$ that can be verified in polynomial time.

\keywords{traveling salesperson problem \and directed graph \and pyramidal tour with step-backs \and polytope \and 1-skeleton \and vertex adjacency.}
\end{abstract}
\section{Introduction}

We consider a classic asymmetric traveling salesperson problem: for a given complete weighted digraph $D_n = (V,E)$ it is required to find a Hamiltonian tour of minimum weight.
We denote by $HT_{n}$ the set of all Hamiltonian tours in $D_{n}$.
With each Hamiltonian tour $x \in HT_{n}$ we associate a characteristic vector $x^{v} \in \R^{E}$ by the following rule:
\[
x^v_e = 
\begin{cases}
1,& \text{ if an edge } e \in E \text{ is contained in the tour } x,\\
0,& \text{ otherwise.}
\end{cases}
\]
The polytope
\[\ATSP(n) = \conv \{x^v \ | \ y \in HT_n \}\]
is called \textit{the asymmetric traveling salesperson polytope}.

The \textit{skeleton} of a polytope $P$ (also called \textit{$1$-skeleton}) is the graph whose vertex set is the vertex set of $P$ (characteristic vectors $x^{v}$ for the traveling salesperson problem) and edge set is the set of geometric edges or one-dimensional faces of $P$.
Many papers are devoted to the study of 1-skeletons associated with combinatorial problems.
On the one hand, the vertex adjacencies in 1-skeleton is of great interest for the development of algorithms to solve problems based on local search technique (when we choose the next solution as the best one among adjacent solutions).
For example, various algorithms for perfect matching, set covering, independent set, ranking of objects, problems with fuzzy measures, and many others are based on this idea \cite{Aguilera,Balinski,Chegireddy,Combarro,Gabow}.
On the other hand, some characteristics of 1-skeleton of the problem, such as the diameter and the clique number, estimate the time complexity for different computation models and classes of algorithms \cite{Bondarenko-Maksimenko,Bondarenko-Nikolaev-2016,Bondarenko-Shovgenov-BS,Grotschel}.

However, for such combinatorial problems as a knapsack, a set partition and set covering, an integer programming, a leaf-constrained and degree-constrained minimum spanning tree, a connected $k$-factor and a some others already the question whether two vertices in 1-skeleton are adjacent is an NP-complete problem \cite{Bondarenko-Shovgenov-ST,Matsui,Simanchev}.
Historically, the first result of this type was obtained by Papadimitriou for the traveling salesperson polytope.

\begin{theorem} [Papadimitriou, \cite{Papadimitriou}]
	The question whether two vertices of the polytope $\TSP(n)$ are nonadjacent is NP-complete.
\end{theorem}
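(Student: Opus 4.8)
The plan is to prove both membership in NP and NP-hardness. For membership, I would invoke the standard characterization that two vertices $a,b$ of a $0/1$ polytope are nonadjacent if and only if the midpoint $\tfrac12(a+b)$ lies in the convex hull of the remaining vertices. By Carath\'eodory's theorem such a representation may be taken to use at most $\dim \TSP(n)+1 = O(n^2)$ vertices, and since every tour has integer (indeed $0/1$) coordinates the coefficients are rationals of polynomially bounded size. A nonadjacency certificate therefore consists of polynomially many Hamiltonian tours together with their coefficients, and verifying that each is a genuine tour and that their convex combination equals $\tfrac12(x^u+x^v)$ is a polynomial-time computation. Hence the problem lies in NP.

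For hardness I would first translate the geometric condition into a combinatorial one. The midpoint $\tfrac12(x^{T_1}+x^{T_2})$ equals $1$ on the common edges $E(T_1)\cap E(T_2)$ and $\tfrac12$ on the symmetric difference, which decomposes into alternating cycles $Q_1,\dots,Q_r$. Every tour occurring in a convex decomposition of the midpoint must contain all common edges and, within each $Q_j$, use either all of its $T_1$-edges or all of its $T_2$-edges; thus such a tour is encoded by the subset of cycles it ``flips,'' with $T_1$ and $T_2$ themselves corresponding to the empty and the full subset. Nonadjacency then amounts to the existence of a further admissible flipping pattern, that is, one producing a single spanning cycle rather than several disjoint subtours. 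This interplay between free local flips and a global connectivity requirement is the lever for encoding an NP-complete problem.

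Accordingly, I would reduce from $3$-SAT. Given a formula, I would construct a digraph together with two explicit reference tours $T_1,T_2$ whose symmetric difference is assembled from variable gadgets and clause gadgets: each variable gadget is a small alternating cycle whose two flipping options represent the literal being set true or false, while each clause gadget is wired so that the flipped edge set remains connected into a single Hamiltonian cycle precisely when at least one of its literals is true. The intended correspondence is a bijection between nontrivial admissible flipping patterns, equivalently the nonadjacency of $T_1$ and $T_2$, and the satisfying assignments of the formula, with the whole construction of size polynomial in the formula.

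The main obstacle I anticipate is the gadget design and its verification. One must guarantee, first, that the only admissible flippings arise from globally consistent per-variable choices, with no spurious tour that crosses gadgets in an unintended way and yet stays connected; and second, that connectivity of the flipped configuration faithfully tracks clause satisfaction, neither accepting an unsatisfying assignment nor rejecting a satisfying one. Controlling the interaction between the local rerouting freedom and the single-cycle condition, and proving that this correspondence is exact, is where essentially all of the work lies.
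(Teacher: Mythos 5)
First, a point of reference: this paper does not prove Papadimitriou's theorem at all; it is stated with a citation, so there is no proof here to compare yours against, and your proposal must stand on its own. Its NP-membership half does: basic feasible solutions of the defining linear system give a Carath\'eodory-size certificate with polynomially bounded rational coefficients. The hardness half, however, rests on a false equivalence. You claim that nonadjacency ``amounts to the existence of a further admissible flipping pattern.'' Only one direction of this is true. If $T_1,T_2$ are nonadjacent, then indeed some further tour $z\subseteq T_1\cup T_2$ exists (this is essentially Lemma~2 of the paper). But one extra tour never certifies nonadjacency: to put the midpoint in the convex hull of the \emph{other} vertices you need, for every alternating cycle $Q_j$, the total weight of the patterns that flip $Q_j$ to be exactly $\tfrac12$, and a single extra pattern $S$ cannot satisfy this -- at minimum its complementary pattern $S^{c}$ must also be admissible, i.e.\ you need a \emph{complementary pair} of tours (the paper's Lemma~1, Rao's sufficient condition). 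The complement of a tour-producing flip is in general only a $2$-factor and may be disconnected: if the symmetric difference has two alternating cycles and flipping $Q_1$ alone yields a tour while flipping $Q_2$ alone disconnects, then $T_1\cup T_2$ contains a third tour and yet $T_1,T_2$ are adjacent. Consequently the direction ``satisfiable $\Rightarrow$ nonadjacent'' of your reduction is unsupported: a satisfying assignment hands you one tour, not a convex-combination certificate. Your gadgets would have to be engineered so that the complementary flipping is \emph{simultaneously} a Hamiltonian cycle (this is precisely the extra constraint a correct construction must arrange), and this is a structural requirement, not a detail.

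Two further gaps. Your claim that every tour in a decomposition of the midpoint flips each $Q_j$ entirely is valid only when $T_1$ and $T_2$ share an edge at every vertex, so that the symmetric difference has maximum degree two; at a vertex incident to four distinct union edges a tour may switch between two different alternating cycles, and your gadgets must exclude this as well. Finally, the variable and clause gadgets -- the entire content of the hardness proof, as you yourself note -- are not constructed. What you have is a plausible high-level strategy in the spirit of Papadimitriou's argument, but with the load-bearing steps missing and the target combinatorial condition misstated.
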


In this regard, the study of 1-skeleton of the traveling salesperson problem has shifted to the study of individual faces of the polytope \cite{Sierksma1993,Sierksma1995}, the polytopes of related problems \cite{Arthanari}, as well as the polytopes of special cases of the traveling salesperson problem.
In particular, for the polytope of the pyramidal tours it was established that the verification of the vertex adjacency in 1-skeleton can be performed in linear time \cite{Bondarenko-Nikolaev-2017,Bondarenko-Nikolaev-2018}.

In this paper, we consider a 1-skeleton of a wider class of the pyramidal tours with step-backs.

\section{Pyramidal tours with step-backs}

We suppose that the cities are labeled from $1$ to $n$.
Let $\tau$ be a Hamilton tour.
We denote the successor of $i$-th city as $\tau (i)$.
For any natural $k$, we denote the $k$-th successor of $i$ as $\tau^{k}(i)$, the $k$-th predecessor of $i$ as $\tau^{-k}(i)$.

The city $i$ satisfying $\tau^{-1}(i) < i$ and $\tau(i) < i$ is called a \textit{peak}.

A \textit{pyramidal tour} is a Hamiltonian tour with only one peak  $n$.

A \textit{step-back peak} (Fig. \ref{Fig_step_backs}) is the city $i$, such that
\[\tau^{-1} < i, \ \tau(i) = i - 1 \text{ and } \tau^{2}(i) > i,\]
or
\[\tau^{-2} > i, \ \tau^{-1}(i) = i - 1 \text{ and } \tau(i) < i.\]

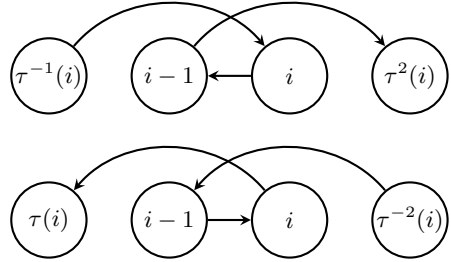
\begin{figure} [t]
	\centering
	\begin{tikzpicture}[scale=0.8]
	\begin{scope}[every node/.style={circle,thick,draw,inner sep=0pt, minimum size=1cm}]
	\node (A) at (0,0) {$\tau^{-1}(i)$};
	\node (B) at (2,0) {$i-1$};
	\node (C) at (4,0) {$i$};
	\node (D) at (6,0) {$\tau^{2}(i)$};
	
	\end{scope}
	\draw [thick,->,>=stealth] (A) edge [bend left=50] (C);
	\draw [thick,->,>=stealth] (C) edge (B);
	\draw [thick,->,>=stealth] (B) edge [bend left=50] (D);
	\end{tikzpicture}
	\\[2mm]
	\begin{tikzpicture}[scale=0.8]
	\begin{scope}[every node/.style={circle,thick,draw,inner sep=0pt, minimum size=1cm}]
	\node (A) at (0,0) {$\tau(i)$};
	\node (B) at (2,0) {$i-1$};
	\node (C) at (4,0) {$i$};
	\node (D) at (6,0) {$\tau^{-2}(i)$};
	
	\end{scope}
	\draw [thick,<-,>=stealth] (A) edge [bend left=50] (C);
	\draw [thick,<-,>=stealth] (C) edge (B);
	\draw [thick,<-,>=stealth] (B) edge [bend left=50] (D);
	\end{tikzpicture}
	
	\caption {Step-back in ascending and descending order}
	\label {Fig_step_backs}
\end{figure}

A \textit{proper peak} is a peak $i$ which is not a step-back peak.
A \textit{pyramidal tour with step-backs} is a Hamiltonian tour with exactly one proper peak $n$.

Traveling salesperson problem on pyramidal tours is one of the most studied polynomial special case of the problem \cite{Gilmore}.
A more general class of pyramidal tours with step-backs was introduced in \cite{Enomoto}.
These tours are of interest, since, on the one hand, the minimum cost pyramidal tour with step-backs can be found in $O(n^2)$ time by dynamic programming, and, on the other hand, there are known restrictions on the distance matrix that guarantee the existence of an optimal tour that is pyramidal with step-backs \cite{Enomoto}.

A generalization of pyramidal tours with step-backs is the class of quasi-pyramidal tours for which the traveling salesperson problem is fixed parameter tractable \cite {Khachay,Oda}.

We denote by $PSBT_n$ the set of all pyramidal tours with step-backs in the complete digraph $D_n = (V,E)$.
With each pyramidal tour with step-backs $x \in PSBT_n$ we associate a characteristic vector $x^v \in \R^E$ by the following rule:
\[
x^v_e = 
\begin{cases}
1,& \text{ if an edge } e \in E \text{ is contained in the tour } x,\\
0,& \text{ otherwise.}
\end{cases}
\]
The polytope
\[\PSB(n) = \conv \{x^v \ | \ x \in PSBT_n \}\]
is called the \textit{polytope of pyramidal tours with step-backs}.

Besides we use a special encoding to represent the pyramidal tours with step-backs.
With each tour $x \in PSBT_n$ we associate an encoding vector $x^{0,1,sb}$ of length $n-2$, each coordinate corresponds to the city number, the positions of the cities $1$ and $n$ are fixed, by the following rule:
\begin{gather*}
\forall i \ (2 \leq i \leq n-1)\\
x^{0,1,sb}_{i} =
\begin{cases}
1, &\text{ if } i \text{ is visited by } x \text{ in ascending order},\\
\overline{1}, &\text{ if } i \text{ is the beginning of ascending step-back},\\
\lvv{1}, &\text{ if } i \text{ is the ending of ascending step-back},\\
0, &\text{ if } i \text{ is visited by } x \text{ in descending order},\\
\overline{0}, &\text{ if } i \text{ is the beginning of descending step-back},\\
\vv{0}, &\text{ if } i \text{  is the ending of ascending step-back}.
\end{cases}
\end{gather*}
An example of a pyramidal tour with step-backs and the corresponding vector $x^{0,1,sb}$ is shown in Fig.~\ref{Fig_encoding}.

\begin{figure}[t]
	\centering
	\begin{tikzpicture}[scale=1.1]
	\begin{scope}[every node/.style={circle,thick,draw,inner sep=3pt}]
	\node (A) at (0,0) {1};
	\node (B) at (1,0) {2};
	\node (C) at (2,0) {3};
	\node (D) at (3,0) {4};
	\node (E) at (4,0) {5};
	\node (F) at (5,0) {6};
	\node (G) at (6,0) {7};
	\node (H) at (7,0) {8};
	\end{scope}
	\draw [thick,->,>=stealth] (A) edge (B);
	\draw [thick,->,>=stealth] (B) edge [bend left=45] (E);
	\draw [thick,->,>=stealth] (E) edge (D);
	\draw [thick,->,>=stealth] (D) edge [bend left=45] (G);
	\draw [thick,->,>=stealth] (G) edge (H);
	\draw [thick,->,>=stealth] (H) edge [bend left=50] (F);
	\draw [thick,->,>=stealth] (F) edge [bend left=50] (C);
	\draw [thick,->,>=stealth] (C) edge [bend left=50] (A);
	
	\node at (3.5,-1.3) {$\Updownarrow$};
	\node at (3.5,-2) {$x^{0,1,sb} = \left\langle 1,0,\lvv{1,1},0,1 \right\rangle$};
	
	\begin{scope}[every node/.style={circle,inner sep=1pt}]
	\node (J) at (2,-0.7) {0};
	\node (K) at (3,1.0) {1};
	\node (L) at (4,1.0) {1};
	\node (M) at (5,-0.7) {0};
	\node (N) at (6,0.7) {1};
	\node (O) at (1,0.7) {1};
	\end{scope}
	
	\draw [->,>=stealth,line width=0.3mm] (J) edge (C);
	\draw [->,>=stealth,line width=0.3mm] (K) edge (D);
	\draw [->,>=stealth,line width=0.3mm] (L) edge (E);
	\draw [->,>=stealth,line width=0.3mm] (M) edge (F);
	\draw [->,>=stealth,line width=0.3mm] (N) edge (G);
	\draw [->,>=stealth,line width=0.3mm] (O) edge (B);
	
	\draw [<-,>=stealth,line width=0.3mm] (2.9,1.25) -- (4.1, 1.25);
	\end{tikzpicture}
	\caption {An example of a tour and corresponding encoding}
	\label {Fig_encoding}
\end{figure}
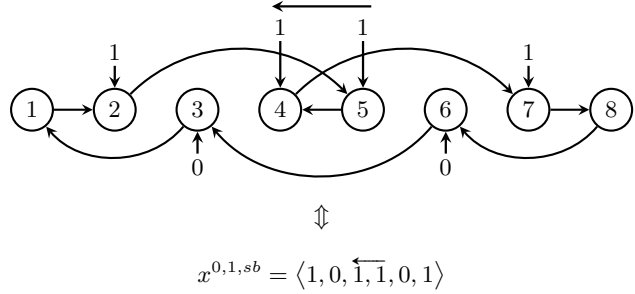

We denote by $x^{0,1,sb}_{[i,j]}$ a fragment of encoding on coordinates from $i$ to $j$.
The superscript indicates what we consider in the encoding.
For example, $x^{1,sb}_{[i,j]}$ means a fragment of the encoding only in ascending order taking into account step-backs; $x^{0,1}_{[i,j]}$ -- a fragment of the encoding without step-backs, etc.

\section{Auxiliary statements}

We denote by $x \cup y$ a multigraph that contains all edges of both tours $x$ and $y$.

\begin{lemma}[Sufficient condition for nonadjacency]\label{lemma_sufficient}
	Given two tours $x$ and $y$, if the multigraph $x \cup y$ includes a pair of other complementary pyramidal tours with step-backs, then the corresponding vertices $x^v$ and $y^v$ of the polytope $\PSB(n)$ are not adjacent.
\end{lemma}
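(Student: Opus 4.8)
The plan is to use the standard linear-algebraic criterion for non-adjacency of polytope vertices and to reduce the combinatorial hypothesis to a single vector identity. First I would recall the general fact: for a polytope $P = \conv\{v_1,\dots,v_N\}$, two vertices $a$ and $b$ are \emph{adjacent} precisely when the segment $[a,b]$ is a one-dimensional face, and a face $F$ has the property that any of its points, written as a convex combination of vertices of $P$, is supported only on vertices lying in $F$. Consequently, $a$ and $b$ fail to be adjacent as soon as the midpoint $\tfrac{1}{2}(a+b)$ can be written as a convex combination of vertices of $P$ that uses at least one vertex different from $a$ and $b$.

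Second, I would translate the hypothesis into vector form. Let $z$ and $w$ be the pair of complementary pyramidal tours with step-backs contained in $x \cup y$, meaning that $z$ and $w$ use disjoint copies of the edges of the multigraph $x \cup y$ and together exhaust it, so that $z \cup w = x \cup y$ as multigraphs. For each edge $e \in E$ the multiplicity of $e$ in $x \cup y$ equals $x^v_e + y^v_e$, while its multiplicity in $z \cup w$ equals $z^v_e + w^v_e$. Equality of the two multigraphs therefore gives the coordinatewise identity
\[
x^v + y^v = z^v + w^v,
\]
and dividing by two yields
\[
\tfrac{1}{2}\left(x^v + y^v\right) = \tfrac{1}{2}\left(z^v + w^v\right).
\]

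Third, since $z$ and $w$ are pyramidal tours with step-backs, $z^v$ and $w^v$ are vertices of $\PSB(n)$. Because the pair $\{z,w\}$ is assumed to differ from $\{x,y\}$, and any decomposition of $x \cup y$ into two tours that reuses $x$ as one half necessarily reconstitutes $y$ as the other half, we obtain $z^v, w^v \notin \{x^v, y^v\}$. Hence the displayed identity exhibits the midpoint of $x^v$ and $y^v$ as a convex combination of two distinct vertices, neither of which is $x^v$ or $y^v$. By the criterion of the first step, $x^v$ and $y^v$ are not adjacent in the skeleton of $\PSB(n)$, which is the desired conclusion.

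The main obstacle here is bookkeeping rather than depth. I must ensure that the multigraph union is read with multiplicities, so that an edge shared by $x$ and $y$ contributes the value $2$ on both sides and the passage to $x^v + y^v = z^v + w^v$ is exact; and I must confirm that the word ``other'' forces $z$ and $w$ to be genuinely new vertices. The latter holds because, as noted, a decomposition of $x \cup y$ containing $x$ would leave exactly the edges of $y$ for its companion, returning the original pair, so distinctness of the pair guarantees distinctness of each tour from both $x$ and $y$.
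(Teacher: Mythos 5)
Your proposal is correct and follows essentially the same route as the paper: you derive the multiplicity identity $x^v + y^v = z^v + w^v$, halve it, and invoke the standard face property of an edge to rule out adjacency, which is exactly the paper's argument (the paper phrases the conclusion as the segments $[x^v,y^v]$ and $[z^v,w^v]$ intersecting). Your additional bookkeeping --- that a complementary decomposition containing $x$ must reconstitute $y$, so both new tours are genuinely distinct from $x$ and $y$ --- is a correct elaboration of what the paper leaves implicit in the word ``other.''
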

\begin{proof}
	Let two complementary pyramidal tours with step-backs $z$ and $t$ be composed from the edges of $x$ and $y$, then for the corresponding vertices the following equality takes place:
	\begin{equation}\label{eq_sufficient_convex}
	x^v + y^v = z^v + t^v.
	\end{equation}
	
	We divide the equality (\ref{eq_sufficient_convex}) by 2 and obtain that the segment $[x^v,y^v]$ intersects with the segment $[z^v,t^v]$.
	Therefore, the vertices $x^v$ and $y^v$ of the polytope $\PSB(n)$ are not adjacent.
\end{proof}

\begin{lemma}[Necessary condition for nonadjacency]\label{lemma_necessary}
	If the vertices $x^v$ and $y^v$ of the polytope $\PSB(n)$ are not adjacent, then the multigraph $x \cup y$ includes at least two pyramidal tours with step-backs other than $x$ and $y$.
\end{lemma}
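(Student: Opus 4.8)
The plan is to argue purely at the level of the polytope, without using the combinatorial structure of step-backs at all, since non-adjacency is a statement about faces. First I would invoke the standard characterization of adjacency: the segment $[x^v,y^v]$ is an edge of $\PSB(n)$ if and only if the only way to write its midpoint $\tfrac12(x^v+y^v)$ as a convex combination of vertices of $\PSB(n)$ is the trivial one $\tfrac12 x^v+\tfrac12 y^v$. Indeed, if $[x^v,y^v]$ is a face, then the midpoint lies in its relative interior and every representation of it by vertices must use only vertices of that face, namely $x^v$ and $y^v$; conversely, the minimal face containing the midpoint is spanned by exactly those vertices that occur in some representation, so uniqueness of the representation forces this face to be $[x^v,y^v]$. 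Hence, since $x^v$ and $y^v$ are non-adjacent, there is a convex combination
\[ \tfrac12 (x^v + y^v) = \sum_{i} \lambda_i z_i^v, \qquad \lambda_i > 0, \quad \sum_i \lambda_i = 1, \]
in which at least one tour $z_i$ differs from both $x$ and $y$.

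Next I would show that every tour $z_i$ occurring above is a subgraph of the multigraph $x \cup y$. The midpoint $\tfrac12(x^v+y^v)$ has a zero coordinate exactly at those edges $e$ that belong neither to $x$ nor to $y$; for such an edge the nonnegativity of all terms forces $(z_i^v)_e = 0$ for every $i$. Consequently the edge set of each $z_i$ is contained in $E(x)\cup E(y)$, so each $z_i$ is a pyramidal tour with step-backs embedded in $x\cup y$. This already produces one tour other than $x$ and $y$ inside $x\cup y$.

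It remains to upgrade \emph{one} to \emph{two}, which I expect to be the only real obstacle. I would collect the weights on $x$ and on $y$ into $\alpha,\beta\ge 0$ and suppose, for contradiction, that all the remaining tours coincide with a single tour $z\notin\{x,y\}$ of total weight $\mu=1-\alpha-\beta>0$. Then
\[ \mu z^v = (\tfrac12-\alpha)x^v + (\tfrac12-\beta)y^v, \]
and since the coefficients on the right sum to $\mu$, dividing by $\mu$ expresses $z^v$ as an affine combination of $x^v$ and $y^v$ with coefficients summing to $1$. As $z\neq x,y$, both coefficients are nonzero, so $x^v$, $y^v$, $z^v$ are three distinct collinear points; whichever of them lies strictly between the other two is then a proper convex combination of two vertices of $\PSB(n)$, contradicting that it is itself a vertex. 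Therefore at least two distinct tours different from $x$ and $y$ appear in the combination, and by the previous paragraph both are contained in $x\cup y$, which is exactly the claim.

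The non-trivial content is thus concentrated in the last step: the passage from a single extra vertex to two, which relies on the elementary fact that among three distinct collinear points the middle one cannot be extreme. The support argument and the adjacency criterion are routine, and notably the proof never uses any property specific to step-backs, in contrast with the sufficiency direction of Lemma~\ref{lemma_sufficient}.
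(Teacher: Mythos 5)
Your proof is correct and takes essentially the same approach as the paper: a point of the segment $[x^v,y^v]$ is written as a convex combination involving vertices other than $x^v,y^v$, the zero coordinates at edges outside $x\cup y$ force every such tour into the multigraph, and extremality of vertices yields at least two such tours. The only cosmetic difference is that the paper starts from the characterization in which $x^v$ and $y^v$ are excluded from the combination outright (so ``at least two'' follows because a segment between two vertices cannot pass through a third vertex), whereas you start from midpoint-uniqueness and eliminate the weights on $x^v,y^v$ via the collinearity argument.
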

\begin{proof}
	Let the vertices $x^v$ and $y^v$ of the polytope $\PSB(n)$ be not adjacent, then the segment $[x^v,y^v]$ intersects with the convex hull of some of the remaining vertices of the polytope $\PSB(n)$:
	\begin{gather}
	\alpha x^v + (1-\alpha) y^v = \beta_1 z^v_1 + \ldots + \beta_m z^v_m,\label{eq_convex_necessary}\\
	\forall i:\ \beta_i > 0 \text{ and } \beta_1 + \ldots + \beta_m = 1.\nonumber
	\end{gather}
	Note that $m \geq 2$, since the segment connecting two vertices of a convex polytope cannot intersect a third vertex.
	If at least one tour $z_i$ includes an edge $e$ that does not belong to the multigraph $x \cup y$, then the equality (\ref{eq_convex_necessary}) is violated in the coordinate corresponding to the edge $e$.
\end{proof}

\begin{lemma}\label{lemma_zero_coefficient}
	Let $x$ and $y$ be two pyramidal tours with step-backs.
	Suppose that there are two edges $e_x$ of $x$ and $e_y$ of $y$ that no pyramidal tour with step-backs can include both edges $e_x$ and $e_y$ at the same time.
	Let the corresponding vertices $x^v$ and $y^v$ of the polytope $\PSB(n)$ be not adjacent.
	Then the convex combination of the remaining vertices of $\PSB (n)$, that coincide with the convex combination of $x^v$ and $y^v$, cannot include with a nonzero coefficient any vertex corresponding to the tour without at least one edge of the pair $e_x$ and $e_y$.
\end{lemma}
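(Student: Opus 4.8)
The plan is to start from the necessary condition for nonadjacency (Lemma~\ref{lemma_necessary}), which gives a representation
\[
\alpha x^v + (1-\alpha) y^v = \beta_1 z_1^v + \ldots + \beta_m z_m^v, \qquad \beta_i > 0, \quad \sum_{i=1}^m \beta_i = 1,
\]
and then to extract all the needed information from just the two coordinates indexed by $e_x$ and $e_y$. A preliminary observation I would record first is that $e_x \notin y$ and $e_y \notin x$: indeed $y$ is a pyramidal tour with step-backs containing $e_y$, so if it also contained $e_x$ it would contain both edges of the pair, contradicting the hypothesis; the argument for $e_y \notin x$ is symmetric. Hence in coordinate $e_x$ the left-hand side equals $\alpha$, and in coordinate $e_y$ it equals $1-\alpha$.

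Next I would write the two coordinate equalities, abbreviating the indicators by $(z_i^v)_{e_x}, (z_i^v)_{e_y} \in \{0,1\}$:
\[
\sum_{i=1}^m \beta_i (z_i^v)_{e_x} = \alpha, \qquad \sum_{i=1}^m \beta_i (z_i^v)_{e_y} = 1-\alpha.
\]
Adding the two equalities eliminates $\alpha$ and yields the single clean relation
\[
\sum_{i=1}^m \beta_i \bigl( (z_i^v)_{e_x} + (z_i^v)_{e_y} \bigr) = 1.
\]
The crucial structural input now enters: since no pyramidal tour with step-backs contains both $e_x$ and $e_y$, every tour $z_i$ satisfies $(z_i^v)_{e_x} + (z_i^v)_{e_y} \leq 1$. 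Together with $\sum_i \beta_i = 1$ this makes the displayed equality a tight packing constraint, and because all $\beta_i > 0$ it forces $(z_i^v)_{e_x} + (z_i^v)_{e_y} = 1$ for every index $i$. Consequently each $z_i$ contains exactly one of the two edges, in particular at least one; so no tour lacking both $e_x$ and $e_y$ can appear with a nonzero coefficient, which is the desired conclusion.

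The computation itself is elementary, so the \emph{main obstacle} is conceptual rather than technical: one must notice that summing the two coordinate equations is exactly what converts the nonadjacency data into a saturated inequality $\sum_i \beta_i(a_i+b_i) = \sum_i \beta_i$ under the per-tour bound $a_i + b_i \le 1$. Once this reformulation is in place, strict positivity of the coefficients does all the work. I would also be careful to justify the preliminary step $e_x \notin y$, $e_y \notin x$ explicitly, since it is precisely what pins down the left-hand side values $\alpha$ and $1-\alpha$ and thereby makes the two coordinate sums combine to exactly $1$.
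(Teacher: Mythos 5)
Your proof is correct and follows essentially the same route as the paper: both restrict the convex-combination identity to the two coordinates $e_x$ and $e_y$, use the fact that no tour in the combination can contribute to both coordinates, and conclude that the coefficients of tours containing one of the two edges already exhaust the total weight $1$. The only differences are cosmetic — the paper splits the tours into three groups and reads off the system $\alpha = \sum \beta_{j,x}$, $1-\alpha = \sum \beta_{k,y}$, while you add the two coordinate equations and invoke tightness; your explicit justification that $e_x \notin y$ and $e_y \notin x$ is a point the paper leaves implicit.
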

\begin{proof}
	Since the vertices $x^v$ and $y^v$ are not adjacent, their convex hull intersects with the convex hull of the remaining vertices of the polytope $\PSB (n)$:
	\begin{gather}
	\alpha x^v + (1-\alpha) y^v = \sum {\beta_i z^v_i} + \sum {\beta_{j,x} z^v_{j,x}} + \sum {\beta_{k,y} z^v_{k,y}}, \label{eq_convex_comb}\\
	\sum {\beta_i} + \sum {\beta_{j,x}} + \sum {\beta_{k,y}} = 1,\nonumber
	\end{gather}
	where $z_{j,x}$ are all pyramidal tours with step-backs, containing the edge $e_x$, and $z_{j,y}$ are all pyramidal tours with step-backs, containing the edge $e_y$.
	The remaining tours $z_i$ do not contain edges $e_x$ and $e_y$, and no pyramidal tour with step-backs can include both edges $e_x$ and $e_y$ at the same time.
	The equality (\ref {eq_convex_comb}) in the coordinates, corresponding to $e_x$ and $e_y$, takes the form of a system
	\[\begin{cases}
	\alpha = \sum {\beta_{j,x}},\\
	1 - \alpha = \sum {\beta_{k,y}}.
	\end{cases}
	\]
	Therefore, $\sum {\beta_i} = 0$.
\end{proof}

\section{Necessary and sufficient condition for adjacency}

We consider 12 blocks of the following form (a wavy line means that the corresponding coordinate can either contain a step-back or not):
\begin{gather*}
U_{11} = \left\langle \begin{matrix} 1 \\ 1 \end{matrix} \right\rangle,\
U_{00} = \left\langle \begin{matrix} 0 \\ 0 \end{matrix} \right\rangle,\
U_{1111} = \left\langle \begin{matrix} \lvv {1 \ \ 1} \\ \lvv {1 \ \ 1} \end{matrix} \right\rangle,\
U_{0000} = \left\langle \begin{matrix} \vv {0 \ \ 0} \\ \vv {0 \ \ 0} \end{matrix} \right\rangle,\\
L_{1110} = \left\langle \begin{matrix} \lvv {1 \ \ 1} \\ 1 \ \ \tilde{0} \end{matrix} \right\rangle,\
L_{1011} = \left\langle \begin{matrix} 1 \ \ \tilde{0} \\ \lvv {1 \ \ 1} \end{matrix} \right\rangle,\
L_{0001} = \left\langle \begin{matrix} \vv {0 \ \ 0} \\ 0 \ \ \tilde{1} \end{matrix} \right\rangle,\
L_{0100} = \left\langle \begin{matrix} 0 \ \ \tilde{1} \\ \vv {0 \ \ 0} \end{matrix} \right\rangle,\\
R_{1101} = \left\langle \begin{matrix} \lvv {1 \ \ 1} \\ \tilde{0} \ \ 1 \end{matrix} \right\rangle,\
R_{0111} = \left\langle \begin{matrix} \tilde{0} \ \ 1 \\ \lvv {1 \ \ 1} \end{matrix} \right\rangle,\
R_{0010} = \left\langle \begin{matrix} \vv {0 \ \ 0} \\ \tilde{1} \ \ 0 \end{matrix} \right\rangle,\
R_{1000} = \left\langle \begin{matrix} \tilde{1} \ \ 0 \\ \vv {0 \ \ 0} \end{matrix} \right\rangle.
\end{gather*}

\begin{theorem}\label{theorem_adjacency}
	Vertices $x^v$ and $y^v$ of the polytope $\PSB(n)$ are not adjacent if and only if the following conditions are satisfied.
	\begin{itemize}
		\item There exists a city $i$ (called \textit{left block}) such that the tours $x$ and $y$ on the coordinate $i$ (coordinates $i$ and $i+1$ for double blocks) have the form of $U,L$, or $i = 1$.
		
		\item There exists a city $j$ (called \textit{right block}) such that the tours $x$ and $y$ on the coordinate $j$ (coordinates $j-1$ and $j$ for double blocks) have the form of $U,R$, or $j = n$.
		
		We denote by $i_a$ the first city after the left block: $i_a = i+1 $ for single blocks and $i_a = i+2$ for double blocks.
		We denote by $j_b$ the last city before the right block: $j_b = i-1$ for single blocks and $j_b = j-2$ for double blocks.
		
		Two blocks cut the encoding of the tours into three parts: the left (less than $i_a$), the central (from $i_a$ to $j_b$) and the right (larger than $j_b$).
		
		\item In the central part, the coordinates of $x^{0,1}$ and $y^{0,1}$ completely coincide: $x^{0,1}_{[i_a,j_b]} = y^{0,1}_{[i_a,j_b]}$.
		
		We say that two tours
		\begin{itemize}
			\item differ in the left part if $x^{0,1,sb}_{[1,i_a-1]} \neq y^{0,1,sb}_{[1,i_a-1]}$,
			\item differ in the right part if $x^{0,1,sb}_{[j_b+1,n]} \neq y^{0,1,sb}_{[j_b+1,n]}$,
			\item differ in the central part in ascending order if $x^{1,sb}_{[i_a,j_b]} \neq y^{1,sb}_{[i_a,j_b]}$,
			\item differ in the central part in descending order if $x^{0,sb}_{[i_a,j_b]} \neq y^{0,sb}_{[i_a,j_b]}$.
		\end{itemize}
		
		The remaining conditions are divided into four cases depending on the values of $x^{0,1}_i$ and $x^{0,1}_j$.
		
		\begin{enumerate}
			\item If $x^{0,1}_{i} = x^{0,1}_{j} = 1$, then the tours differ
			\begin{itemize}
				\item in the central part in ascending order;
				\item in the left part, or in the central part in descending order, or in the right part.
			\end{itemize}
		
			\item If $x^{0,1}_{i} = x^{0,1}_{j} = 0$, then the tours differ
			\begin{itemize}
				\item in the central part in descending order;
				\item in the left part, or in the central part in ascending order, or in the right part.
			\end{itemize}
		
			\item If $x^{0,1}_{i} = 1, x^{0,1}_{j} = 0$, then the tours differ	
			\begin{itemize}
				\item in the central part in ascending order or in the right part;
				\item in the central part in descending order or in the left part.
			\end{itemize}
		
			\item If $x^{0,1}_{i} = 0, x^{0,1}_{j} = 1$, then the tours differ
		
			\begin{itemize}
				\item in the central part in descending order or in the right part;
				\item in the central part in ascending order or in the left part.
			\end{itemize}
		\end{enumerate}
	
	Cities $1$ and $n$ can be considered in the encoding as visited in ascending or descending order, if required.
	\end{itemize}
\end{theorem}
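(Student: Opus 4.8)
The plan is to prove the biconditional by reducing non-adjacency to a purely combinatorial fact: that the multigraph $x \cup y$ admits a second, \emph{complementary} pair of pyramidal tours with step-backs $z, t$ satisfying $z^v + t^v = x^v + y^v$. One direction of this reduction is exactly Lemma~\ref{lemma_sufficient}. For the other direction I would start from the convex combination supplied by Lemma~\ref{lemma_necessary}. In every coordinate corresponding to an edge lying in both $x$ and $y$ the left-hand side $\alpha x^v + (1-\alpha)y^v$ equals $1$, so each participating tour $z_i$ must contain every such common edge; hence the $z_i$ differ from $x$ and $y$ only on the symmetric-difference edges. Applying Lemma~\ref{lemma_zero_coefficient} to each pair of edges $e_x \in x$, $e_y \in y$ that cannot coexist in a single tour forces every $z_i$ to commit consistently to the choices of $x$ or of $y$ at each contested position, which collapses the combination onto a single complementary pair $z, t$. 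Thus non-adjacency is equivalent to the existence of such a pair, and it remains to characterize when that pair exists in terms of the blocks.

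For the sufficiency of the block conditions I would construct $z$ and $t$ explicitly. The left block at $i$ and the right block at $j$ are precisely the places where both tours may be severed simultaneously: at a $U$-block the two tours agree, and at an $L$- or $R$-block they differ only by the local reconfiguration that the block encodes, so in either case the ascending and descending branches of $x$ and $y$ enter and leave the central part $[i_a,j_b]$ through the same cities. The condition $x^{0,1}_{[i_a,j_b]} = y^{0,1}_{[i_a,j_b]}$ guarantees that inside the central part $x$ and $y$ route each city on the same branch, so the central ascending fragment and the central descending fragment can each be exchanged independently between the two tours. I would then define $z$ and $t$ by swapping the matched pair of fragments dictated by the case: the four cases correspond to the four ways the block directions $x^{0,1}_i, x^{0,1}_j$ orient the severed branch-ends, and each prescribes which simultaneous pair of exchanges (central ascending together with the left part, the central descending part, or the right part, and symmetrically) reconnects into a single Hamiltonian cycle rather than splitting off a subtour. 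The two listed differences in each case are exactly what make both exchanges nontrivial, so $z, t \neq x, y$, while by construction $z$ and $t$ use precisely the edges of $x \cup y$; Lemma~\ref{lemma_sufficient} then yields non-adjacency.

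For the necessity of the block conditions I would take the complementary pair $z, t$ produced above and read off the structure it must have. Since $z \cup t = x \cup y$ as multigraphs, every city at which the recombination actually reroutes an edge must sit at a position where the local patterns of $x$ and $y$ form one of the admissible blocks; in particular the extreme rerouted positions give the left and right blocks, unless they abut the fixed cities $1$ and $n$. A direction mismatch $x^{0,1}_c \neq y^{0,1}_c$ strictly inside the central part would force the recombined tours to acquire a second proper peak or to violate step-back validity at $c$, which is impossible for a pyramidal tour with step-backs; this establishes $x^{0,1}_{[i_a,j_b]} = y^{0,1}_{[i_a,j_b]}$. Finally, tracking which fragments are exchanged in each of the four orientation cases shows that a genuine second pair can arise only when the tours differ in exactly the combination of parts listed, giving the case-by-case conditions.

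The main obstacle I expect is the step-back bookkeeping. The double blocks $U_{1111}, U_{0000}, L_{1011}, L_{1110}, R_{1101}, R_{0111}$ and their descending analogues are exactly the configurations in which a cut point coincides with a step-back, and for each of them one must verify that the exchanged fragments still splice into legal ascending and descending step-backs, so that $z$ and $t$ remain pyramidal tours with step-backs and not merely Hamiltonian cycles. Preserving the single-proper-peak invariant under the swaps, together with an exhaustive check of these local step-back splices across all four cases, is where the real work lies; the reduction through Lemmas~\ref{lemma_sufficient}--\ref{lemma_zero_coefficient} and the two-block cut structure are what keep this case analysis finite and organized.
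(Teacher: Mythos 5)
Your sufficiency half matches the paper's: both construct $z$ by splicing the left, central-ascending, central-descending, and right fragments of $x$ and $y$ according to the four cases, take $t = (x \cup y) \setminus z$, and invoke Lemma~\ref{lemma_sufficient}. The genuine gap is in your necessity half, at its very first step: the claim that non-adjacency forces the convex combination of Lemma~\ref{lemma_necessary} to ``collapse onto a single complementary pair'' $z,t$ with $z^v + t^v = x^v + y^v$. Lemma~\ref{lemma_zero_coefficient} cannot deliver this. Its conclusion is only that every tour appearing with nonzero coefficient contains at least one edge of each conflicting pair $\{e_x, e_y\}$; it does not couple the choices made at different contested positions, and indeed they must not be globally coupled, since the witnessing tours generally mix fragments of $x$ and of $y$ (that is exactly what the construction in Table~\ref{table_tour_z} does). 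Worse, even knowing that some pyramidal tour with step-backs $z \subset x \cup y$, $z \neq x,y$, contains all common edges does not produce a complementary partner: the edge set $(x \cup y)\setminus z$ merely has in-degree and out-degree $1$ at every vertex, so it is a vertex-disjoint union of directed cycles, which may split into several cycles or fail to be pyramidal with step-backs. This is precisely the gap between the necessary condition (Lemma~\ref{lemma_necessary}) and the sufficient condition (Lemma~\ref{lemma_sufficient}); for this polytope the implication ``non-adjacent $\Rightarrow$ a complementary pair exists in $x \cup y$'' is a consequence of the whole theorem, not something obtainable abstractly up front. Note also that identifying which pairs $e_x, e_y$ actually conflict already requires the block-structure case analysis you are trying to postpone, so the reduction is circular as well as unproven.

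The paper's necessity proof avoids this entirely: it works with a single tour $z$ having nonzero coefficient (Lemma~\ref{lemma_necessary}), takes the lowest-numbered cities $i$ and $j$ where $z$ switches between edges of $x$ and edges of $y$, and shows by case analysis that such cities cannot be traversed by $x$ and $y$ in opposite directions, that switches can occur only at blocks $U, L, R$, and that the four difference conditions hold --- invoking Lemma~\ref{lemma_zero_coefficient} exactly in those configurations where $z$ omits both edges of a conflicting pair, which contradicts its nonzero coefficient. The complementary pair appears only at the end, constructively, in the sufficiency direction. To repair your argument you must either actually prove the collapsing claim (which I expect is essentially as hard as the theorem itself) or restructure the necessity argument around a single nonzero-coefficient tour, as the paper does; your paragraph on reading off the block structure from $z, t$ would then have to be redone for a lone $z$, with Lemma~\ref{lemma_zero_coefficient} replacing the appeals to ``$t$ must contain both forbidden edges.''
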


\begin{proof}
	\textit{Necessity.} Let the vertices $x^v$ and $y^v$ of $\PSB(n)$ be not adjacent, then by Lemma~\ref{lemma_necessary} there exists a pyramidal tour with step-backs $z \subset x \cup y$, different from $x$ and $y$, such that the vertex $z^v$ is in a convex combination
	\[\alpha x^v + (1-\alpha) y^v = \beta z^v + \sum{\beta_i z_i}\]
	with a nonzero coefficient.

	We choose the city $i$ with the smallest number such that $z$ enters $i$ along an edge of the tour $x$, and leaves along an edge of the tour $y$. 	
	We choose the city $j$ with the smallest number such that $z$ enters $j$ along an edge of the tour $y$, and leaves along an edge of the tour $x$.
	By construction, the tour $z$ contains edges of both $x$ and $y$, therefore such cities exist.
	
	\textit{Part 1.} Let us prove that the city $i$ (city $j$) cannot be visited by the tours $x$ and $y$ in opposite orders.
	Without loss of generality, we consider the case when the city $i$ is visited by $z$ and $x$ in ascending order, and by $y$ in descending order.
	The remaining cases are treated similarly, since they are completely symmetrical.
	
	Let's consider an edge $e_{i,y}$ of the tour $y$ that leaves $i$ and is included in the tour $z$. It can lead either to a city with a smaller number, or to a city with a larger number.
	
	\begin{enumerate}
		\item Let the edge $e_{i,y}$ lead to a city with a smaller number. 
		However, the edge $e_{i,y}$ is a part of $z$ in ascending order.
		The only possible option is an edge of the form $(i-1) \leftarrow (i)$ that will be a step-back in ascending order of the tour $z$.
		In the multigraph $x \cup y$ there are two edges leaving $i-1$: $e_{i-1,x}$ of $x$ and $e_{i-1,y}$ of $y$.
		The edge $e_{i-1,y}$ leads to a city with a smaller number, since $i-1$ is visited by $y$ in descending order.
		Thus, it cannot be a part of $z$, otherwise $z$ in not a pyramidal tour with step-backs.
		Therefore, $z$ can go only along the edge $e_{i-1,x}$ to a city with a larger number.
		Since the cities $i-1$ and $i$ are visited by $x$ in ascending order, only two configurations are possible (the city $i$ is in bold):
			\[
		\text{a)} \left\langle \begin{matrix} 1 \ \ \bf{1} \\ 0 \ \ \bf{0} \end{matrix} \right\rangle,\
		\text{b)} \left\langle \begin{matrix} 1 \ \ \lvv {{\bf 1} \ \ 1} \\ 0 \ \ \bf{0} \ \ ~ \end{matrix} \right\rangle.
		\]
		In both of them, the tour $z$ goes to a city that has already been passed before: a) $i$, b) $i + 1$ (Fig.~\ref{Fig_1_0_back}). 
		Hereinafter, if there are no additional instructions, the following notation is used in the figures: solid edges -- edges of $z$, dashed -- edges of $(x \cup y) \backslash z$, dotted -- transitions of $z$ between edges of $x$ and $y$.
		
		\begin{figure} [t]
			\centering
			\begin{tikzpicture}[scale=0.7]
			
			\begin{scope}[every node/.style={circle,thick,draw,inner sep=0pt, minimum size=0.9cm}]
			\node (A) at (0,0) {$i-1$};
			\node (B) at (2,0) {$i$};
			\node (C) at (0,-2) {$i-1$};
			\node (D) at (2,-2) {$i$};	
			\end{scope}
			
			\node at (-1.2,0) {$x$};
			\node at (-1.2,-2) {$y$};
			\node at (-2,-1) {a)};
			
			\draw [thick,->,>=stealth] (A) edge (B);
			\draw [thick,->,>=stealth,dotted] (B) edge (D);
			\draw [thick,->,>=stealth] (D) edge (C);
			\draw [thick,->,>=stealth,dotted] (C) edge (A);
			
			\begin{scope}[xshift=6cm]
			\begin{scope}[every node/.style={circle,thick,draw,inner sep=0pt, minimum size=0.9cm}]
			\node (A) at (0,0) {$i-1$};
			\node (B) at (2,0) {$i$};
			\node (C) at (4,0) {$i+1$};
			
			\node (D) at (0,-2) {$i-1$};
			\node (E) at (2,-2) {$i$};	
			\end{scope}
			
			\node at (-1.2,0) {$x$};
			\node at (-1.2,-2) {$y$};
			\node at (-2,-1) {b)};
			
			\draw [thick,->,>=stealth] (A) edge [bend left=50] (C);
			\draw [thick,->,>=stealth] (C) edge (B);
			\draw [thick,->,>=stealth,dotted] (B) edge (E);
			\draw [thick,->,>=stealth] (E) edge (D);
			\draw [thick,->,>=stealth,dotted] (D) edge (A);
			\end{scope}
			\end{tikzpicture}
			
			\caption {Transition $1 \rightarrow 0$ (cases 1 (a) and (b))}
			\label {Fig_1_0_back}
		\end{figure}
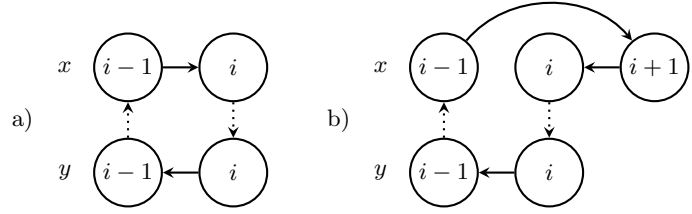
	
		\item Let the edge $e_{i,y}$ lead to a city with a larger number. However, $e_{i,y}$ is a part of $y$ in descending order. 
		Consequently, this is an edge of the form $(i) \rightarrow (i+1)$ that was a step-bask of $y$.
		The next edge of $z$ has to go to a city with a larger number, since we cannot return to $i$.
		Two edges $e_{i+1,y}$ of $y$ and $e_{i+1,x}$ of $x$ leaves the city $i-1$.
		The edge $e_{i+1,y}$ is directed to a city with a smaller number, since $i+1$ is visited by $y$ in descending order. Therefore, $z$ can go only along the edge $e_{i+1,x}$.
		We consider the possible configurations:
		\begin{gather*}
		\text{a)} \left\langle \begin{matrix} {\bf 1} \ \ 1 \\ \vv {{\bf 0} \ \ 0} \end{matrix} \right\rangle,\
		\text{b)} \left\langle \begin{matrix} {\bf 1} \ \ \lvv {1 \ \ 1} \\ \vv {{\bf 0} \ \ 0} \ \ ~ \end{matrix} \right\rangle,\
		\text{c)} \left\langle \begin{matrix} {\bf 1} \ \ \vv {0 \ \ 0} \\ \vv {{\bf 0} \ \ 0} \ \ ~ \end{matrix} \right\rangle,\\
		\text{d)} \left\langle \begin{matrix} \lvv {1 \ \ {\bf 1}} \ \ 1 \\ ~ \ \ \vv {{\bf 0} \ \ 0} \end{matrix} \right\rangle,\
		\text{e)} \left\langle \begin{matrix} \lvv {1 \ \ {\bf 1}} \ \ \lvv{1 \ \ 1} \\~ \ \ \vv {{\bf 0} \ \ 0} \ \ ~ \end{matrix} \right\rangle,\
		\text{f)} \left\langle \begin{matrix} \lvv {1 \ \ {\bf 1}} \ \ \vv{0 \ \ 0} \\~ \ \ \vv {{\bf 0} \ \ 0} \ \ ~ \end{matrix} \right\rangle.
		\end{gather*}
		
		\begin{enumerate}
			\item [a)] Transitions in cities $i$ and $i+1$ between tours $x$ and $y$ do not make sense, since the edge $(i) \rightarrow (i+1)$ is included in both tours (Fig.~\ref{Fig_1_0_forward_ab}).
	
		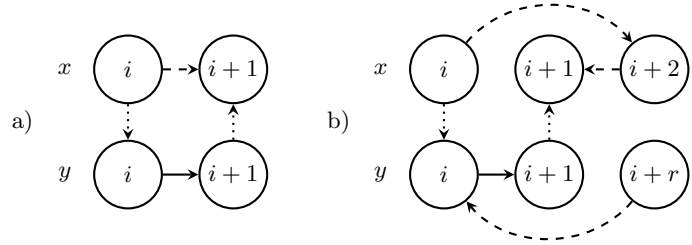
\begin{figure} [t]
			\centering
			\begin{tikzpicture}[scale=0.7]
			
			\begin{scope}[every node/.style={circle,thick,draw,inner sep=0pt, minimum size=0.9cm}]
			\node (A) at (0,0) {$i$};
			\node (B) at (2,0) {$i+1$};
			\node (C) at (0,-2) {$i$};
			\node (D) at (2,-2) {$i+1$};	
			\end{scope}
			
			\node at (-1.2,0) {$x$};
			\node at (-1.2,-2) {$y$};
			\node at (-2,-1) {a)};
			
			\draw [thick,->,>=stealth,dashed] (A) edge (B);
			\draw [thick,->,>=stealth,dotted] (A) edge (C);
			\draw [thick,->,>=stealth] (C) edge (D);
			\draw [thick,->,>=stealth,dotted] (D) edge (B);
			
			\begin{scope}[xshift=6cm]
			\begin{scope}[every node/.style={circle,thick,draw,inner sep=0pt, minimum size=0.9cm}]
			\node (A) at (0,0) {$i$};
			\node (B) at (2,0) {$i+1$};
			\node (C) at (4,0) {$i+2$};
			\node (A1) at (0,-2) {$i$};
			\node (B1) at (2,-2) {$i+1$};
			\node (C1) at (4,-2) {$i+r$};	
			\end{scope}
			
			\node at (-1.2,0) {$x$};
			\node at (-1.2,-2) {$y$};
			\node at (-2,-1) {b)};
			
			\draw [thick,->,>=stealth,dashed] (A) edge [bend left=50] (C);
			\draw [thick,->,>=stealth,dashed] (C) edge (B);
			\draw [thick,->,>=stealth] (A1) edge (B1);
			\draw [thick,->,>=stealth,dashed] (C1) edge [bend left=50] (A1);
			\draw [thick,->,>=stealth,dotted] (A) edge (A1);
			\draw [thick,->,>=stealth,dotted] (B1) edge (B);			
			\end{scope}	
			\end{tikzpicture}
			
			\caption {Transition $1 \rightarrow 0$ (cases 2 (a) and (b))}
			\label {Fig_1_0_forward_ab}
		\end{figure}
	
			\item [b)] We consider the edges $(i) \rightarrow (i+2)$ of $x$ and $(i) \leftarrow (i+r)$ of $y$ (Fig.~\ref{Fig_1_0_forward_ab}).
			None of them is included in $z$.
			The edge $(i) \rightarrow (i+2)$ cannot be a part of descending order, the edge $(i) \leftarrow (i+r)$ cannot be a part of ascending order.
			Therefore, no pyramidal tour with step-backs can contain both edges at the same time.
			By Lemma~\ref{lemma_zero_coefficient}, the vertex $z^v$ cannot be included in a convex combination that coincides with a convex combination of $x^v$ and $y^v$ with a nonzero coefficient.
			We got a contradiction.
			
			\item [c)] We consider the edges $(i-r) \leftarrow (i+2)$ of $x$ and $(i-s) \leftarrow (i+1)$ of $y$ (Fig.~\ref{Fig_1_0_forward_cd}).
			None of them is included in $z$, since the cities $i+1$ and $i+2$ are visited in ascending order.
			However, only these two edges lead from cities with numbers greater than $i$ to cities with numbers less than $i$.
			The tour $z$ cannot return to the city $1$.
			We got a contradiction.
				
			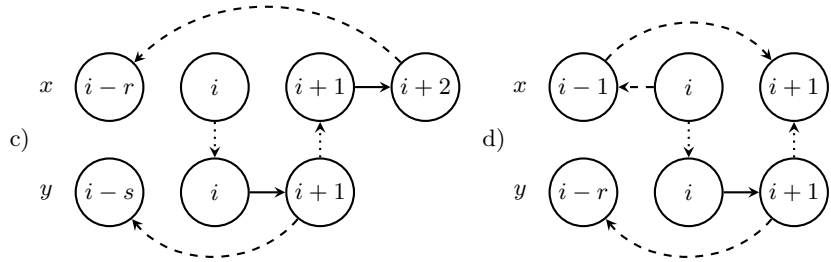
\begin{figure} [t]
				\centering
				\begin{tikzpicture}[scale=0.7]
				
				\begin{scope}[every node/.style={circle,thick,draw,inner sep=0pt, minimum size=0.9cm}]
				\node (O) at (-2,0) {$i-r$};
				\node (A) at (0,0) {$i$};
				\node (B) at (2,0) {$i+1$};
				\node (C) at (4,0) {$i+2$};
				\node (O1) at (-2,-2) {$i-s$};
				\node (A1) at (0,-2) {$i$};
				\node (B1) at (2,-2) {$i+1$};
				\end{scope}
				
				\node at (-3.2,0) {$x$};
				\node at (-3.2,-2) {$y$};
				\node at (-3.7,-1) {c)};
				
				\draw [thick,->,>=stealth] (B) edge (C);
				\draw [thick,->,>=stealth] (A1) edge (B1);
				\draw [thick,->,>=stealth,dashed] (C) edge [bend right=45] (O);
				\draw [thick,->,>=stealth,dashed] (B1) edge [bend left=50] (O1);
				\draw [thick,->,>=stealth,dotted] (A) edge (A1);
				\draw [thick,->,>=stealth,dotted] (B1) edge (B);
				
				\begin{scope}[xshift=7cm]
				\begin{scope}[every node/.style={circle,thick,draw,inner sep=0pt, minimum size=0.9cm}]
				\node (A) at (0,0) {$i-1$};
				\node (B) at (2,0) {$i$};
				\node (C) at (4,0) {$i+1$};
				\node (A1) at (0,-2) {$i-r$};
				\node (B1) at (2,-2) {$i$};
				\node (C1) at (4,-2) {$i+1$};	
				\end{scope}
				
				\node at (-1.2,0) {$x$};
				\node at (-1.2,-2) {$y$};
				\node at (-1.7,-1) {d)};
				
				\draw [thick,->,>=stealth,dashed] (A) edge [bend left=50] (C);
				\draw [thick,->,>=stealth,dashed] (B) edge (A);
				\draw [thick,->,>=stealth] (B1) edge (C1);
				\draw [thick,->,>=stealth,dashed] (C1) edge [bend left=50] (A1);
				\draw [thick,->,>=stealth,dotted] (B) edge (B1);
				\draw [thick,->,>=stealth,dotted] (C1) edge (C);					
				\end{scope}
				
				\end{tikzpicture}
				
				\caption {Transition $1 \rightarrow 0$ (cases 2 (c) and (d))}
				\label {Fig_1_0_forward_cd}
			\end{figure}
		
			\item[d)] We consider the edges $(i-1) \rightarrow (i+1)$ of $x$ and $(i-r) \leftarrow (i+1)$ of $y$ (Fig.~\ref {Fig_1_0_forward_cd}).
			As in case b), these two edges are not included in $z$ and no pyramidal tour with step-backs can contain both these edges at the same time.
			By Lemma~\ref{lemma_zero_coefficient}, we got a contradiction.

			\item [e)] The configuration has the form shown in Fig.~\ref{Fig_1_0_forward_e} (the edges of $z$ are solid).
			\begin{figure} [t]
				\centering
				\begin{tikzpicture}[scale=0.7]
				
				\begin{scope}[every node/.style={circle,thick,draw,inner sep=0pt, minimum size=0.9cm}]
				\node (A) at (0,0) {$i-1$};
				\node (B) at (2,0) {$i$};
				\node (C) at (4,0) {$i+1$};
				\node (D) at (6,0) {$i+2$};			
				\node (O1) at (-2,-2) {$i-r$};
				\node (A1) at (0,-2) {$i-1$};
				\node (B1) at (2,-2) {$i$};
				\node (C1) at (4,-2) {$i+1$};
				\node (D1) at (6,-2) {$i+2$};
				\node (E1) at (8,-2) {$i+s$};	
				\end{scope}
				
				\begin{scope}[every node/.style={circle,thick,draw,inner sep=0pt, minimum size=0.9cm}]
				\node (O) at (-2,0) {};
				\node (E) at (8,0) {};
				\end{scope}
				
				\node at (-3.2,0) {$x$};
				\node at (-3.2,-2) {$y$};
				
				\draw [thick,->,>=stealth] (O) edge [bend left=50] (B);
				\draw [thick,->,>=stealth] (B1) edge (C1);
				\draw [thick,->,>=stealth] (C) edge [bend left=50] (E);
				\draw [thick,->,>=stealth,dotted] (B) edge (B1);
				\draw [thick,->,>=stealth,dotted] (C1) edge (C);
				\draw [thick,->,>=stealth,dashed] (B) edge (A);
				\draw [thick,->,>=stealth,dashed] (D) edge (C);
				\draw [thick,->,>=stealth,dashed] (A) edge [bend left=45] (D);
				\draw [thick,->,>=stealth,dashed] (C1) edge [bend left=45] (O1);
				\draw [thick,->,>=stealth,dashed] (E1) edge [bend left=45] (B1);
				
				\end{tikzpicture}
				
				\caption {Transition $1 \rightarrow 0$ (case 2 (e))}
				\label {Fig_1_0_forward_e}
			\end{figure}
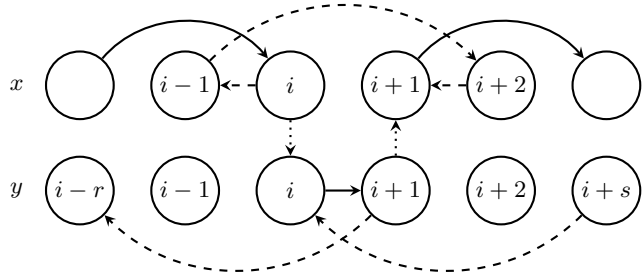
			We consider the edge $e_{i-1,y}$ of $y$ that leaves the city $i-1$.
			\begin{itemize}
				\item Let the edge $e_{i-1,y}$ be directed to a city with a larger number.
				Suppose that there exists a pyramidal tour with step-backs $t$ that contains both the edges $ (i+1) \leftarrow (i+2)$ and $(i) \leftarrow (i+s)$ that are not included in $z$ (Fig.~\ref {Fig_1_0_forward_e_1}, the edges of $t$ are solid).
				\begin{figure} [t]
				\centering
				\begin{tikzpicture}[scale=0.7]
				\begin{scope}[every node/.style={circle,thick,draw,inner sep=0pt, minimum size=0.9cm}]
				\node (A) at (0,0) {$i-1$};
				\node (B) at (2,0) {$i$};
				\node (C) at (4,0) {$i+1$};
				\node (D) at (6,0) {$i+2$};			
				\node (A1) at (0,-2) {$i-1$};
				\node (B1) at (2,-2) {$i$};
				\node (C1) at (4,-2) {$i+1$};
				\node (D1) at (6,-2) {};
				\node (E1) at (8,-2) {$i+s$};
				\end{scope}
				
				\node at (-1.2,0) {$x$};
				\node at (-1.2,-2) {$y$};
				
				\draw [thick,->,>=stealth,dashed] (B1) edge (C1);
				\draw [thick,->,>=stealth,dotted] (B1) edge (B);
				\draw [thick,->,>=stealth,dotted] (A) edge (A1);
				\draw [thick,->,>=stealth] (B) edge (A);
				\draw [thick,->,>=stealth] (D) edge (C);
				\draw [thick,->,>=stealth,dashed] (A) edge [bend left=40] (D);
				\draw [thick,->,>=stealth] (E1) edge [bend left=40] (B1);
				\draw [thick,->,>=stealth,dashed] (A1) edge [bend right=40] (D1);
				\end{tikzpicture}
				
				\caption {Case 2 (e), $e_{i-1,y}$ is directed to a city with a larger number}
				\label {Fig_1_0_forward_e_1}
			\end{figure}
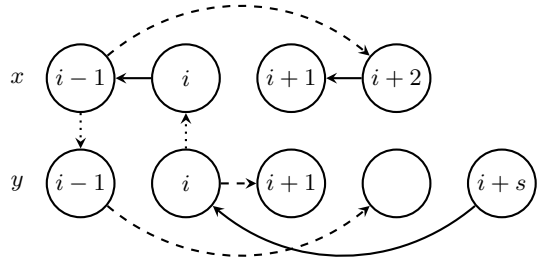
			There are two edges from $i$: $(i) \rightarrow (i+1)$ and $(i-1) \leftarrow (i)$.
			The edge $(i) \rightarrow (i+1)$ cannot be part of $t$, since in this case two edges $(i) \rightarrow (i+1)$ and $(i+1) \leftarrow (i+2) $ of $t$ enter the city $i+1$.
			Therefore, the edge $(i-1) \leftarrow (i)$ is a part of $t$, and the cities $i-1$ and $i$ are visited by $t$ in descending order.
			However, both edges leaving the city $i-1$ are directed to cities with numbers at least $i+1$.
			Thus, no pyramidal tour with step-backs $t$ can contain both edges $(i+1) \leftarrow (i+2)$ and $(i) \leftarrow (i+s)$ that are not included in $z$.
			By Lemma~\ref{lemma_zero_coefficient}, we got a contradiction.
			
			\item Let the edge $e_{i-1,y}$ be directed to a city with a smaller number.
			Suppose that there exists a pyramidal tour with step-backs $t$ that contains both the edges $(i-1) \leftarrow (i)$ and $(i-r) \leftarrow (i+1)$ that are not included in $z$ (Fig.~\ref {Fig_1_0_forward_e_2}, the edges of $t$ are solid).
			\begin{figure} [t]
				\centering
				\begin{tikzpicture}[scale=0.7]
				
				\begin{scope}[every node/.style={circle,thick,draw,inner sep=0pt, minimum size=0.9cm}]
				\node (A) at (0,0) {$i-1$};
				\node (B) at (2,0) {$i$};
				\node (C) at (4,0) {$i+1$};
				\node (D) at (6,0) {$i+2$};			
				\node (O1) at (-4,-2) {$i-r$};
				\node (P1) at (-2,-2) {$i-p$};
				\node (A1) at (0,-2) {$i-1$};
				\node (B1) at (2,-2) {$i$};
				\node (C1) at (4,-2) {$i+1$};
				\end{scope}

				\node at (-5.2,0) {$x$};
				\node at (-5.2,-2) {$y$};
				
				\draw [thick,->,>=stealth,dashed] (B1) edge (C1);
				\draw [thick,->,>=stealth,dotted] (A) edge (A1);
				\draw [thick,->,>=stealth,dotted] (C) edge (C1);
				\draw [thick,->,>=stealth] (B) edge (A);
				\draw [thick,->,>=stealth] (D) edge (C);
				\draw [thick,->,>=stealth] (A1) edge (P1);
				\draw [thick,->,>=stealth,dashed] (A) edge [bend left=45] (D);
				\draw [thick,->,>=stealth] (C1) edge [bend left=35] (O1);
				
				\end{tikzpicture}
				
				\caption {Case 2 (e), $e_{i-1,y}$ is directed to a city with a smaller number}
				\label {Fig_1_0_forward_e_2}
			\end{figure}
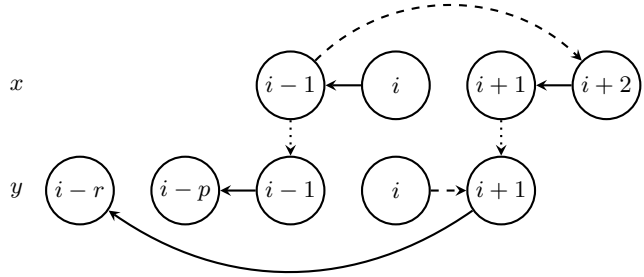
			There are two edges directed to $i+1$: $(i) \rightarrow (i+1)$ and $(i+1) \leftarrow (i+2)$.
			The edge $(i) \rightarrow (i+1)$ cannot be part of $t$, since in this case two edges $(i) \rightarrow (i+1)$ and $(i-1) \leftarrow (i)$ of $t$ leaves the city $i$.
			Therefore, the edge $(i+1) \leftarrow (i+2)$ is a part of $t$ and the cities $i+1$ and $i+2$ are visited by $t$ in descending order.
			In this case, the tour $t$ has the edge $(i-p) \leftarrow (i-1)$ of two edges that leave the city $i-1$. 
			Consequently, the cities $i$ and $i-1$ are also visited in descending order.
			However, no pyramidal tour with step-backs can go along the edges $(i+1) \leftarrow (i+2)$ and $(i-r) \leftarrow (i+1)$ and visit the city $i$ in descending order.
			Thus, no pyramidal tour with step-backs $t$ can contain both the edges $(i-1) \leftarrow (i)$ and $(i-r) \leftarrow (i+1)$ that are not included in $z$.
			By Lemma~\ref{lemma_zero_coefficient}, we got a contradiction.
			\end{itemize}
		
			\item[f)] Similar to the case c) there are only three edges: $(i-1) \leftarrow (i)$, $(i-r) \leftarrow (i+2)$, and $(i-s) \leftarrow (i+1)$ that lead to the cities with numbers less than $i$ (Fig.~\ref{Fig_1_0_forward_f}), and none of them are included in $z$.
			The tour $z$ cannot return to the city $1$.
			We got a contradiction.
			\begin{figure} [t]
				\centering
				\begin{tikzpicture}[scale=0.7]
				
				\begin{scope}[every node/.style={circle,thick,draw,inner sep=0pt, minimum size=0.9cm}]
				\node (O) at (-2,0) {$i-r$};
				\node (A) at (0,0) {$i-1$};
				\node (B) at (2,0) {$i$};
				\node (C) at (4,0) {$i+1$};
				\node (D) at (6,0) {$i+2$};
				\node (A1) at (0,-2) {$i-s$};
				\node (B1) at (2,-2) {$i$};
				\node (C1) at (4,-2) {$i+1$};	
				\end{scope}
				
				\node at (-3.2,0) {$x$};
				\node at (-3.2,-2) {$y$};
				
				\draw [thick,->,>=stealth] (B1) edge (C1);
				\draw [thick,->,>=stealth] (C) edge (D);
				\draw [thick,->,>=stealth,dashed] (B) edge (A);
				\draw [thick,->,>=stealth,dashed] (D) edge [bend right=35] (O);
				\draw [thick,->,>=stealth,dashed] (C1) edge [bend left=50] (A1);
				\draw [thick,->,>=stealth,dotted] (B) edge (B1);
				\draw [thick,->,>=stealth,dotted] (C1) edge (C);
				
				\end{tikzpicture}
				
				\caption {Transition $1 \rightarrow 0$ (case 2 (f))}
				\label {Fig_1_0_forward_f}
			\end{figure}
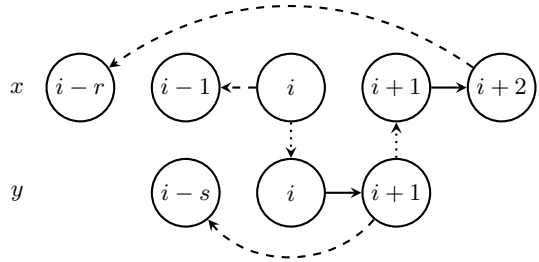
		\end{enumerate}
	\end{enumerate}
	
	Thus, if the tour $z$ enters the city $i$ along the edge of $x$ and leaves along the edge of $y$, or vice versa, then $i$ is visited by $x$ and $y$ in the same order.
	Since, from the city 1 there are only edges leading to the cities that are visited in ascending order by $x$ and $y$, the tour $z$ in ascending order includes only edges of ascending orders of $x$ and $y$, in descending order -- only edges of descending orders.

	\textit{Part 2.} Let us prove that transitions between the edges of $x$ and $y$ can only be performed by blocks $U,L,R$ from the statement of the theorem.
	Without loss of generality, we assume that the city $i$ is visited by $x$ and $y$ in ascending order ($x^{0,1}_i = y^{0,1}_i = 1$). The case $x^{0,1}_i = y^{0,1}_i = 0$ is treated similarly.
	We consider the possible configurations:
	\begin{gather*}
	\text{a)} \left\langle \begin{matrix} 1 \\ 1 \end{matrix} \right\rangle,\
	\text{b)} \left\langle \begin{matrix} 1 \hspace{1.2em} \\ \lvv {1 \ \ 1}\end{matrix} \right\rangle,\
	\text{c)} \left\langle \begin{matrix} \hspace{1.2em} 1 \\ \lvv {1 \ \ 1} \end{matrix} \right\rangle,\
	\text{d)} \left\langle \begin{matrix} \lvv {1 \ \ 1} \\ \hspace{1.2em} 1 \end{matrix} \right\rangle,\
	\text{e)} \left\langle \begin{matrix} \lvv {1 \ \ 1} \\ 1 \hspace{1.2em} \end{matrix} \right\rangle,\\
	\text{f)} \left\langle \begin{matrix} \lvv {1 \ \ 1} \\ \lvv {1 \ \ 1} \end{matrix} \right\rangle,\
	\text{g)} \left\langle \begin{matrix} \hspace{1.2em} \lvv {1 \ \ 1} \\ \lvv {1 \ \ 1} \hspace{1.2em} \end{matrix} \right\rangle,\
	\text{h)} \left\langle \begin{matrix} \lvv {1 \ \ 1} \hspace{1.2em} \\ \hspace{1.2em} \lvv {1 \ \ 1} \end{matrix} \right\rangle.
	\end{gather*}
	
	\begin{enumerate}
		\item[a)] The transition has the form of a block $U_{11}$.
		
		\item[b)] Suppose that the city $i+1$ is visited by $x$ in ascending order ($x^{0,1}_{i+1} = 1$):
		\[ 
		\text{b1)} \left\langle \begin{matrix} 1 \ \ 1 \\ \lvv {1 \ \ 1}\end{matrix} \right\rangle,\
		\text{b2)} \left\langle \begin{matrix} 1 \ \ \lvv {1 \ \ 1} \\ \lvv {1 \ \ 1} \hspace{1.2em}\end{matrix}  \right\rangle.
		\]
		In the case b1) none of the edges entering the city $i+1$ was included in the tour $z$, therefore, $z$ cannot be a Hamiltonian tour.
		
		In the case b2), the tour $z$ can visit the city $i+1$ only along the edge $(i+1) \leftarrow (i+2)$ of $x$.
		At the same time, $z$ can enter the city $i+2$ only along the edge of the tour $y$ in ascending order:
		\[ 
		\text{b21)} \left\langle \begin{matrix} 1 \ \ \lvv {1 \ \ 1} \\ \lvv {1 \ \ 1} \ \ 1\end{matrix}  \right\rangle,\
		\text{b22)} \left\langle \begin{matrix} 1 \ \ \lvv {1 \ \ 1} \hspace{1.2em} \\ \lvv {1 \ \ 1} \ \ \lvv {1 \ \ 1}\end{matrix}  \right\rangle.
		\]
		In the case b21), transition between the tours $x$ and $y$ does not make sense, since the edge $(i) \rightarrow (i+2)$ is contained in both tours (Fig.~\ref{Fig_1_1_bg}).
		
		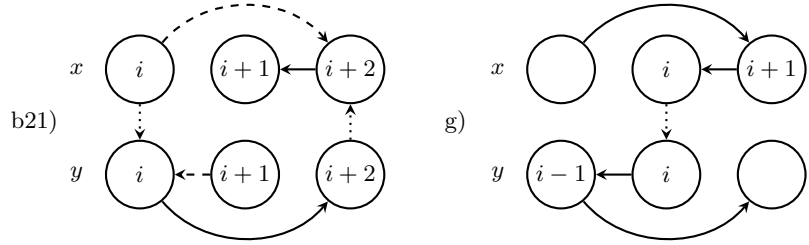
\begin{figure} [t]
			\centering
			\begin{tikzpicture}[scale=0.7]
			
			\begin{scope}[every node/.style={circle,thick,draw,inner sep=0pt, minimum size=0.9cm}]
			\node (A) at (0,0) {$i$};
			\node (B) at (2,0) {$i+1$};
			\node (C) at (4,0) {$i+2$};
			\node (A1) at (0,-2) {$i$};
			\node (B1) at (2,-2) {$i+1$};
			\node (C1) at (4,-2) {$i+2$};	
			\end{scope}
			
			\node at (-1.2,0) {$x$};
			\node at (-1.2,-2) {$y$};
			\node at (-2,-1) {b21)};
			
			\draw [thick,->,>=stealth,dashed] (A) edge [bend left=50] (C);
			\draw [thick,->,>=stealth] (C) edge (B);
			\draw [thick,->,>=stealth] (A1) edge [bend right=50] (C1);
			\draw [thick,->,>=stealth,dashed] (B1) edge (A1);
			\draw [thick,->,>=stealth,dotted] (A) edge (A1);
			\draw [thick,->,>=stealth,dotted] (C1) edge (C);
			
			\begin{scope}[xshift=8cm]
			\begin{scope}[every node/.style={circle,thick,draw,inner sep=0pt, minimum size=0.9cm}]
			\node (A) at (0,0) {};
			\node (B) at (2,0) {$i$};
			\node (C) at (4,0) {$i+1$};
			\node (A1) at (0,-2) {$i-1$};
			\node (B1) at (2,-2) {$i$};
			\node (C1) at (4,-2) {};	
			\end{scope}
			
			\node at (-1.2,0) {$x$};
			\node at (-1.2,-2) {$y$};
			\node at (-2,-1) {g)};
			
			\draw [thick,->,>=stealth] (A) edge [bend left=50] (C);
			\draw [thick,->,>=stealth] (C) edge (B);
			\draw [thick,->,>=stealth] (B1) edge (A1);
			\draw [thick,->,>=stealth] (A1) edge [bend right=50] (C1);
			\draw [thick,->,>=stealth,dotted] (B) edge (B1);
			\end{scope}
			
			\end{tikzpicture}
			
			\caption {Transition $1 \rightarrow 1$ (cases (b21) and (g))}
			\label {Fig_1_1_bg}
		\end{figure}
		The case b22) contains a transition of the configuration g), the impossibility of which will be considered separately.
		
		Thus, the city $i+1 $ can be visited by $x$ only in descending order ($x^{0,1}_{i+1} = 0$). 
		The transition has the form of a block:
		\[L_{1011} = \left\langle \begin{matrix} 1 \ \ \tilde{0} \\ \lvv{1 \ \ 1} \end{matrix} \right\rangle.\]
		
		\item[c,d,e)] Similar to configuration b), the transitions have the form of the blocks $R_{0111}$, $R_{1101}$, and $L_{1110}$.
		
		\item[f)] The transition has the form of a block $U_{1111}$.

		\item[g)] The tour $z$ enters the city $i$ by the edge $(i) \leftarrow (i+1)$ of $x$ and leaves by the edge $(i-1) \leftarrow (i)$ of $y$. 
		None of the pyramidal tours with step-backs can contain both these edges in ascending order, since this is a double step-back (Fig.~\ref{Fig_1_1_bg}).	
		
		\item[h)] The edges $(i-1) \leftarrow (i)$ and $(i) \leftarrow (i+1)$ are not included in $z$ and no pyramidal tour with step-backs can contain both these edges at the same time (Fig.~\ref{Fig_1_1_h}).
		By Lemma~\ref{lemma_zero_coefficient}, we got a contradiction.
		\begin{figure} [t]
			\centering
			\begin{tikzpicture}[scale=0.7]
			
			\begin{scope}[every node/.style={circle,thick,draw,inner sep=0pt, minimum size=0.9cm}]
			\node (O) at (-2,0) {};
			\node (A) at (0,0) {$i-1$};
			\node (B) at (2,0) {$i$};
			\node (B1) at (2,-2) {$i$};
			\node (C1) at (4,-2) {$i+1$};
			\node (D1) at (6,-2) {};	
			\end{scope}
			
			\node at (-3.2,0) {$x$};
			\node at (-3.2,-2) {$y$};
			
			\draw [thick,->,>=stealth] (O) edge [bend left=50] (B);
			\draw [thick,->,>=stealth,dashed] (B) edge (A);
			\draw [thick,->,>=stealth,dashed] (C1) edge (B1);
			\draw [thick,->,>=stealth] (B1) edge [bend right=50] (D1);
			\draw [thick,->,>=stealth,dotted] (B) edge (B1);
			
			\end{tikzpicture}
			
			\caption {Transition $1 \rightarrow 1$ (case (h))}
			\label {Fig_1_1_h}
		\end{figure}
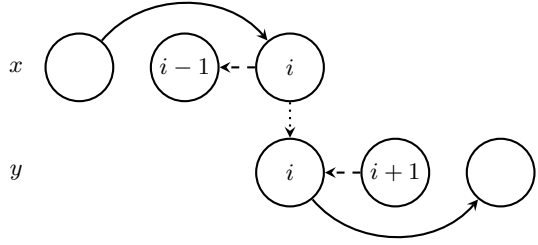
	\end{enumerate}

	Thus, the transition between the edges of $x$ and $y$ is possible only at blocks $U,L,R$ from the statement of the theorem. Besides, it can be done at cities $1$ and $n$ which can also be considered as universal blocks.

	\textit{Part 3.} Let us prove that the remaining conditions of the theorem are satisfied.
	By construction, $i$ is the city with the smallest number, such that $z$ enters $i$ by an edge of $x$ and leaves by an edge of $y$, $j$ is the city with the smallest number, such that $z$ enters $j$ by an edge of $y$ and leaves by an edge of $x$.
	
	The coordinates $x^{0,1}_{i}$ and $x^{0,1}_j$ can take one of the four combinations of the values $0$ and $1$ that are described in the statement of the theorem.
	Without loss of generality we assume that $i < j$ and $x^{0,1}_{i} = 1$, other cases are treated similarly.
	
	Since $i$ and $j$ are the cities with the smallest numbers where the tour $z$ makes transitions between edges of $x$ and $y$, the traversal diagram has the form shown in Fig.~\ref{Fig_3}. In particular, the tour $z$ visits $i-1$ by the edges of $x$.
	
	\begin{figure} [t]
		\centering
		\begin{tikzpicture}[scale=0.8]
		
		\begin{scope}[every node/.style={circle,thick,draw,inner sep=0pt, minimum size=0.9cm}]
		\node (A) at (0,0) {$i$};
		\node (B) at (4,0) {$j$};
		\node (A1) at (0,-2) {$i$};
		\node (B1) at (4,-2) {$j$};	
		\end{scope}
		
		\node at (-3.2,0) {$x$};
		\node at (-3.2,-2) {$y$};
		
		\draw [thick,<->,>=stealth] (A) edge (-2,0);
		\draw [thick,->,>=stealth] (B) edge (A);
		\draw [thick,->,>=stealth] (A1) edge (B1);
		\draw [thick,->,>=stealth,dotted] (A) edge (A1);
		\draw [thick,->,>=stealth,dotted] (B1) edge (B);
		
		\end{tikzpicture}
		
		\caption {The traversal diagram of the tour $z$ if $i < j$}
		\label {Fig_3}
	\end{figure}
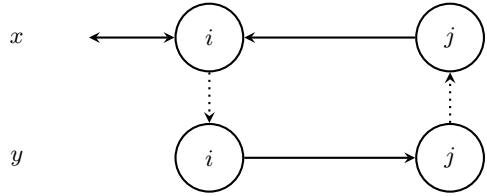
	
	First we prove that transition at $i$ has the form of a left block $U$ or $L$.
	Suppose the contrary, then the block has one of two possible forms:
	\[ 
	R_{1101} = \left\langle \begin{matrix} \lvv {1 \ \ 1} \\ \tilde{0} \ \ 1 \end{matrix} \right\rangle,\
	R_{0111} = \left\langle \begin{matrix} \tilde{0} \ \ 1 \\ \lvv {1 \ \ 1} \end{matrix} \right\rangle.
	\]
	In the case of $R_{1101}$, the tour $z$ skips the city $i-1$ in ascending order, since the edge $(i-1) \leftarrow (i)$ of $x$ is not a part of $z$, and then again skips $i-1$ in descending order.
	In the case of $R_{0111}$, the tour $z$ visits the city $i-1$ in ascending order, since this time $(i-1) \leftarrow (i)$ of $y$ is a part of $z$, and then again visits $i-1$ in descending order.
	In both cases, the tour is not Hamiltonian.
	Similarly, we can prove that the transition at $j$ cannot have the form of a left block $L$.
	
	We denote by $i_a$ the first city after the left block: $i_a = i+1$ for single blocks and $i_a = i + 2$ for double blocks. We denote by $j_b$ the last city before the right block: $j_b = i-1$ for single blocks and $j_b = j-2$ for double blocks.
	
	Therefore, by construction, in the central part between the blocks, the tour $z$ in ascending order  goes along the edges of $y$: $z^{1,sb}_{[i_a,j_b]} = y^{1,sb}_{[i_a,j_b]}$, in descending order -- along the edges of $x$: $z^{0,sb}_{[i_a,j_b]} = x^{0,sb}_{[i_a,j_b]}$.  While on the left side $z$ moves along the edges of $x$ in both directions: $z^{0,1,sb}_{[1,i_a-1]} = x^{0,1,sb}_{[1,i_a-1]}$ (Fig~\ref{Fig_3}).

	We combine the conditions for the central part:
	\[
	\begin{cases}
	z^{1,sb}_{[i_a,j_b]} = y^{1,sb}_{[i_a,j_b]},\\
	z^{0,sb}_{[i_a,j_b]} = x^{0,sb}_{[i_a,j_b]}
	\end{cases}
	\Rightarrow \ \ \
	x^{0,1}_{[i_a,j_b]} = y^{0,1}_{[i_a,j_b]}.
	\]	
	Indeed, if for at least one city in the central part the coordinates of $x^{0,1}_{[i_a,j_b]}$ and $y^{0,1}_{[i_a,j_b]}$ do not match, then the tour $z$ will either skip this city or visit it twice.
	
	\begin{enumerate}
		\item If $x^{0,1}_{j} = 1$, then both cities $i$ and $j$ are visited by $x,y,z$ in ascending order.
		We verify the remaining conditions of the theorem.
		\begin{itemize}
			\item If the first condition is not satisfied:
			\[x^{1,sb}_{[i_a,j_b]} = y^{1,sb}_{[i_a,j_b]} = z^{1,sb}_{[i_a,j_b]},\] 
			then the transitions at the cities $i$ and $j$ do not make sense, since all the edges of $y$ that are part of $z$ as a result are also contained in the tour $x$.
			
			\item If the second condition is not satisfied:
			\[
			\begin{cases}
			x^{0,sb}_{[i_a,j_b]} = y^{0,sb}_{[i_a,j_b]} = z^{0,sb}_{[i_a,j_b]},\\
			x^{0,1,sb}_{[1,i_a-1]} = y^{0,1,sb}_{[1,i_a-1]} = z^{0,1,sb}_{[1,i_a-1]},\\
			x^{0,1,sb}_{[j_b+1,n]} = y^{0,1,sb}_{[j_b+1,n]} = z^{0,1,sb}_{[j_b+1,n]},
			\end{cases}
			\] 
			then the tour $z$ completely coincides with the tour $y$.
		\end{itemize}
		
		\item If $x^{0,1}_{j} = 0$, then the city $i$ is visited by $x,y,z$ in ascending order, the city $j$ -- in descending order. We verify the remaining conditions of the theorem.
		\begin{itemize}
			\item If the first condition is not satisfied:
			\[
			\begin{cases}
			x^{1,sb}_{[i_a,j_b]} = y^{1,sb}_{[i_a,j_b]} = z^{1,sb}_{[i_a,j_b]},\\
			x^{0,1,sb}_{[j_b+1,n]} = y^{0,1,sb}_{[j_b+1,n]} = z^{0,1,sb}_{[j_b+1,n]},
			\end{cases}
			\]
			then the tour $z$ completely coincides with the tour $x$.
			
			\item If the second condition is not satisfied:
			\[
			\begin{cases}
			x^{0,sb}_{[i_a,j_b]} = y^{0,sb}_{[i_a,j_b]} = z^{0,sb}_{[i_a,j_b]},\\
			x^{0,1,sb}_{[1,i_a-1]} = y^{0,1,sb}_{[1,i_a-1]} = z^{0,1,sb}_{[1,i_a-1]},
			\end{cases}
			\]	
			then the transitions at the cities $i$ and $j$ do not make sense, since all the edges of $x$ that are part of $z$ as a result are also contained in the tour $y$.	
		\end{itemize}
	\end{enumerate}
	
	Thus, if the vertices $x^v$ and $y^v$ of the polytope $\PSB(n)$ are not adjacent, then the conditions of the theorem are satisfied.
	
	\textit {Sufficiency.}
	Suppose that sufficient conditions of the theorem are satisfied.
	We consider the pyramidal tour with step-backs $z$, constructed as described in Table~\ref{table_tour_z}, and the pyramidal tour with step-backs $t$, constructed as $t = (x \cup y) \backslash z$.
	The multigraph $x \cup y$ includes a pair of complementary pyramidal tours with step-backs $z$ and $t$, different from $x$ and $y$. Thus, by Lemma~\ref{lemma_sufficient} the vertices $x^v$ and $y^v$ of the polytope $\PSB(n)$ are not adjacent. Examples of the first and third sufficient conditions are shown in Fig.~\ref{Fig_sufficient}.
	
	\begin{table}[t]
		\centering
		\caption{Construction of the tour $z$}
		\label{table_tour_z}
		\begin{tabular}{|c|c|}
			\hline
			\begin{minipage}{0.4\linewidth}
				~\\
				1. If $x^{0,1}_{i} = x^{0,1}_{j} = 1$, then
				\[z^{0,1,sb}_{k} =
				\begin{cases}
				x^{0,1,sb}_{k},& \text{if } k < i_a,\\
				y^{1,sb}_{k},& \text{if } i_a \leq k \leq j_b,\\
				x^{0,sb}_{k},& \text{if } i_a \leq k \leq j_b,\\
				x^{0,1,sb}_{k},& \text{if } k > j_b.
				\end{cases}\]
			\end{minipage} & 
			\begin{minipage}{0.4\linewidth}
				~\\
				2. If $x^{0,1}_{i} = x^{0,1}_{j} = 0$, then 
				\[z^{0,1,sb}_{k} =
				\begin{cases}
				x^{0,1,sb}_{k},& \text{if } k < i_a,\\
				x^{1,sb}_{k},& \text{if } i_a \leq k \leq j_b,\\
				y^{0,sb}_{k},& \text{if } i_a \leq k \leq j_b,\\
				x^{0,1,sb}_{k},& \text{if } k > j_b.
				\end{cases}\]
			\end{minipage}
			\\ \hline
			\begin{minipage}{0.4\linewidth}
				~\\
				3. If $x^{0,1}_{i} = 1$, $x^{0,1}_{j} = 0$, then 
				\[z^{0,1,sb}_{k} =
				\begin{cases}
				x^{0,1,sb}_{k},& \text{if } k < i_a,\\
				x^{0,sb}_{k},& \text{if } i_a \leq k \leq j_b,\\
				y^{1,sb}_{k},& \text{if } i_a \leq k \leq j_b,\\
				y^{0,1,sb}_{k},& \text{if } k > j_b.
				\end{cases}\]
			\end{minipage} & 
			\begin{minipage}{0.4\linewidth}
				~\\
				4. If $x^{0,1}_{i} = 0$, $x^{0,1}_{j} = 1$, then 
				\[z^{0,1,sb}_{k} =
				\begin{cases}
				x^{0,1,sb}_{k},& \text{if } k < i_a,\\
				x^{1,sb}_{k},& \text{if } i_a \leq k \leq j_b,\\
				y^{0,sb}_{k},& \text{if } i_a \leq k \leq j_b,\\
				y^{0,1,sb}_{k},& \text{if } k > j_b.
				\end{cases}\]
		\end{minipage} \\ \hline
		\end{tabular}
	\end{table}
	
	\begin{figure} [t]
		\centering
		\begin{tikzpicture}[scale=0.8]
		
		\node at (-0.7,0) {\textit{x}};
		\node at (0,0) {$ \langle $};
		\node at (1,0) {1};
		\node at (2,0) {1};
		\node at (3,0) {1};
		\node at (4,0) {1};
		\node at (5,0) {1};
		\node at (6,0) {$ \rangle $};
		
		\draw [<-,>=stealth] (1.85,0.25) -- (3.1,0.25);
		
		\node at (-0.7,-0.75) {\textit{y}};
		\node at (0,-0.75) {$ \langle $};
		\node at (1,-0.75) {1};
		\node at (2,-0.75) {1};
		\node at (3,-0.75) {1};
		\node at (4,-0.75) {1};
		\node at (5,-0.75) {0};
		\node at (6,-0.75) {$ \rangle $};
		
		\draw [dashed] (0.75,0.25) -- (1.25,0.25) -- (1.25,-1) -- (0.75,-1) -- cycle;
		\draw [dashed] (3.75,0.25) -- (4.25,0.25) -- (4.25, -1) -- (3.75,-1) -- cycle;
		
		\begin{scope}[yshift=-2cm]
		\begin{scope}[every node/.style={circle,thick,draw,inner sep=2.4pt}]
		\node (A) at (0,0) {1};
		\node (B) at (1,0) {2};
		\node (C) at (2,0) {3};
		\node (D) at (3,0) {4};
		\node (E) at (4,0) {5};
		\node (F) at (5,0) {6};
		\node (G) at (6,0) {7};
		\end{scope}
		\draw [thick,->,>=stealth] (A) edge (B);
		\draw [thick,->,>=stealth] (B) edge [bend left=50] (D);
		\draw [thick,->,>=stealth] (D) edge (C);	
		\draw [thick,->,>=stealth] (C) edge [bend left=50] (E);
		\draw [thick,->,>=stealth] (E) edge (F);	
		\draw [thick,->,>=stealth] (F) edge (G);
		\draw [thick,->,>=stealth] (G) edge [bend left=30] (A);
		\draw (-0.7, 0) node{\textit{x}};
		\end{scope}
		
		\begin{scope}[yshift=-4cm]
		\begin{scope}[every node/.style={circle,thick,draw,inner sep=2.4pt}]
		\node (A) at (0,0) {1};
		\node (B) at (1,0) {2};
		\node (C) at (2,0) {3};
		\node (D) at (3,0) {4};
		\node (E) at (4,0) {5};
		\node (F) at (5,0) {6};
		\node (G) at (6,0) {7};
		\end{scope}
		\draw [thick,->,>=stealth,dashed] (A) edge (B);
		\draw [thick,->,>=stealth,dashed] (B) edge (C);
		\draw [thick,->,>=stealth,dashed] (C) edge (D);	
		\draw [thick,->,>=stealth,dashed] (D) edge (E);	
		\draw [thick,->,>=stealth,dashed] (E) edge [bend left=50] (G);
		\draw [thick,->,>=stealth,dashed] (G) edge (F);		
		\draw [thick,->,>=stealth,dashed] (F) edge [bend left=35] (A);
		\draw (-0.7, 0) node{\textit{y}};		
		\end{scope}
		
		\begin{scope}[yshift=-6cm]
		\begin{scope}[every node/.style={circle,thick,draw,inner sep=2.4pt}]
		\node (A) at (0,0) {1};
		\node (B) at (1,0) {2};
		\node (C) at (2,0) {3};
		\node (D) at (3,0) {4};
		\node (E) at (4,0) {5};
		\node (F) at (5,0) {6};
		\node (G) at (6,0) {7};
		\end{scope}
		\draw [thick,->,>=stealth] (A) edge (B);
		\draw [thick,->,>=stealth,dashed] (B) edge (C);
		\draw [thick,->,>=stealth,dashed] (C) edge (D);	
		\draw [thick,->,>=stealth,dashed] (D) edge (E);
		\draw [thick,->,>=stealth] (E) edge (F);	
		\draw [thick,->,>=stealth] (F) edge (G);
		\draw [thick,->,>=stealth] (G) edge [bend left=30] (A);	
		\draw (-0.7, 0) node{\textit{z}};		
		\end{scope}
		
		\begin{scope}[yshift=-8cm]
		\begin{scope}[every node/.style={circle,thick,draw,inner sep=2.4pt}]
		\node (A) at (0,0) {1};
		\node (B) at (1,0) {2};
		\node (C) at (2,0) {3};
		\node (D) at (3,0) {4};
		\node (E) at (4,0) {5};
		\node (F) at (5,0) {6};
		\node (G) at (6,0) {7};
		\end{scope}
		\draw [thick,->,>=stealth,dashed] (A) edge (B);
		\draw [thick,->,>=stealth] (B) edge [bend left=50] (D);
		\draw [thick,->,>=stealth] (D) edge (C);	
		\draw [thick,->,>=stealth] (C) edge [bend left=50] (E);
		\draw [thick,->,>=stealth,dashed] (E) edge [bend left=50] (G);
		\draw [thick,->,>=stealth,dashed] (G) edge (F);		
		\draw [thick,->,>=stealth,dashed] (F) edge [bend left=35] (A);
		\draw (-0.7, 0) node{\textit{t}};		
		\end{scope}
		
		\begin{scope}[xshift=8cm]
		\node at (-0.7,0) {\textit{x}};
		\node at (0,0) {$ \langle $};
		\node at (1,0) {1};
		\node at (2,0) {0};
		\node at (3,0) {0};
		\node at (4,0) {1};
		\node at (5,0) {0};
		\node at (6,0) {$ \rangle $};	
		
		\node at (-0.7,-0.75) {\textit{y}};
		\node at (0,-0.75) {$ \langle $};
		\node at (1,-0.75) {1};
		\node at (2,-0.75) {0};
		\node at (3,-0.75) {0};
		\node at (4,-0.75) {0};
		\node at (5,-0.75) {0};
		\node at (6,-0.75) {$ \rangle $};
		
		\draw [->,>=stealth] (1.9,-0.5) -- (3.1,-0.5);
		\draw [->,>=stealth] (3.9,-0.5) -- (5.1,-0.5);
		
		\draw [dashed] (0.75,0.25) -- (1.25,0.25) -- (1.25,-1) -- (0.75,-1) -- cycle;
		\draw [dashed] (3.75,0.25) -- (5.25,0.25) -- (5.25, -1) -- (3.75,-1) -- cycle;	
		
		\begin{scope}[yshift=-2cm]
		\begin{scope}[every node/.style={circle,thick,draw,inner sep=2.4pt}]
		\node (A) at (0,0) {1};
		\node (B) at (1,0) {2};
		\node (C) at (2,0) {3};
		\node (D) at (3,0) {4};
		\node (E) at (4,0) {5};
		\node (F) at (5,0) {6};
		\node (G) at (6,0) {7};
		\end{scope}
		\draw [thick,->,>=stealth] (A) edge (B);
		\draw [thick,->,>=stealth] (B) edge [bend left=40] (E);
		\draw [thick,->,>=stealth] (E) edge [bend left=50] (G);
		\draw [thick,->,>=stealth] (G) edge (F);
		\draw [thick,->,>=stealth] (F) edge [bend left=50] (D);
		\draw [thick,->,>=stealth] (D) edge (C);
		\draw [thick,->,>=stealth] (C) edge [bend left=50] (A);
		\draw (-0.7, 0) node{\textit{x}};			
		\end{scope}
		
		\begin{scope}[yshift=-4cm]
		\begin{scope}[every node/.style={circle,thick,draw,inner sep=2.4pt}]
		\node (A) at (0,0) {1};
		\node (B) at (1,0) {2};
		\node (C) at (2,0) {3};
		\node (D) at (3,0) {4};
		\node (E) at (4,0) {5};
		\node (F) at (5,0) {6};
		\node (G) at (6,0) {7};
		\end{scope}
		\draw [thick,->,>=stealth,dashed] (A) edge (B);
		\draw [thick,->,>=stealth,dashed] (B) edge [bend left=33] (G);
		\draw [thick,->,>=stealth,dashed] (G) edge [bend left=50] (E);	
		\draw [thick,->,>=stealth,dashed] (E) edge (F);	
		\draw [thick,->,>=stealth,dashed] (F) edge [bend left=45] (C);
		\draw [thick,->,>=stealth,dashed] (C) edge (D);	
		\draw [thick,->,>=stealth,dashed] (D) edge [bend left=45] (A);
		\draw (-0.7, 0) node{\textit{y}};		
		\end{scope}
		
		\begin{scope}[yshift=-6cm]
		\begin{scope}[every node/.style={circle,thick,draw,inner sep=2.4pt}]
		\node (A) at (0,0) {1};
		\node (B) at (1,0) {2};
		\node (C) at (2,0) {3};
		\node (D) at (3,0) {4};
		\node (E) at (4,0) {5};
		\node (F) at (5,0) {6};
		\node (G) at (6,0) {7};
		\end{scope}
		\draw [thick,->,>=stealth] (A) edge (B);
		\draw [thick,->,>=stealth,dashed] (B) edge [bend left=33] (G);
		\draw [thick,->,>=stealth,dashed] (G) edge [bend left=50] (E);	
		\draw [thick,->,>=stealth,dashed] (E) edge (F);	
		\draw [thick,->,>=stealth] (F) edge [bend left=50] (D);
		\draw [thick,->,>=stealth] (D) edge (C);
		\draw [thick,->,>=stealth] (C) edge [bend left=50] (A);
		\draw (-0.7, 0) node{\textit{z}};		
		\end{scope}
		
		\begin{scope}[yshift=-8cm]
		\begin{scope}[every node/.style={circle,thick,draw,inner sep=2.4pt}]
		\node (A) at (0,0) {1};
		\node (B) at (1,0) {2};
		\node (C) at (2,0) {3};
		\node (D) at (3,0) {4};
		\node (E) at (4,0) {5};
		\node (F) at (5,0) {6};
		\node (G) at (6,0) {7};
		\end{scope}
		\draw [thick,->,>=stealth,dashed] (A) edge (B);
		\draw [thick,->,>=stealth] (B) edge [bend left=45] (E);
		\draw [thick,->,>=stealth] (E) edge [bend left=50] (G);
		\draw [thick,->,>=stealth] (G) edge (F);
		\draw [thick,->,>=stealth,dashed] (F) edge [bend left=45] (C);
		\draw [thick,->,>=stealth,dashed] (C) edge (D);	
		\draw [thick,->,>=stealth,dashed] (D) edge [bend left=45] (A);
		\draw (-0.7, 0) node{\textit{t}};		
		\end{scope}
		\end{scope}
		\end{tikzpicture}	
		\caption{Examples of first and third sufficient conditions}
		\label{Fig_sufficient}
	\end{figure}
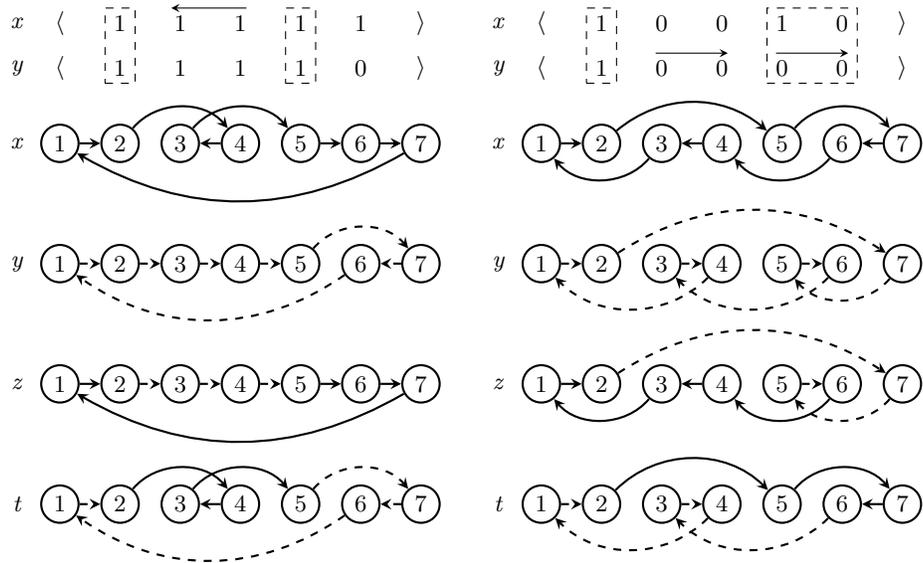

\end{proof}

\begin{theorem}
	The question whether two vertices of the polytope $\PSB(n)$ are adjacent can be verified in polynomial time.	
\end{theorem}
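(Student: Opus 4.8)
The plan is to derive this statement directly from Theorem~\ref{theorem_adjacency}, which already reformulates nonadjacency (and hence, by negation, adjacency) as a purely combinatorial condition on the encodings $x^{0,1,sb}$ and $y^{0,1,sb}$. The strategy is to argue that every ingredient of that condition can be tested by a bounded number of linear scans over the two encodings, so the overall verification runs in polynomial time.

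First I would precompute, from the two tours $x$ and $y$, their encodings $x^{0,1,sb}$ and $y^{0,1,sb}$ together with the derived projections $x^{0,1}$, $x^{1,sb}$, $x^{0,sb}$ and likewise for $y$. Reading off the successor and predecessor of each city and classifying it as ascending, descending, or an endpoint of a step-back takes $O(n)$ time, so all these vectors are available in linear time. Next I would enumerate the candidate blocks. A city $i$ can serve as a left block only if the columns of $x$ and $y$ at $i$, or at $i,i+1$ for a double block, realize one of the admissible patterns $U_{11}, U_{1111}, U_{0000}, L_{1110}, L_{1011}, L_{0001}, L_{0100}$, or else $i=1$; symmetrically, a city $j$ is a right block only for the patterns $U$, $R_{1101}, R_{0111}, R_{0010}, R_{1000}$, or $j=n$. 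Whether a given city realizes one of these finitely many patterns is decided by inspecting a constant number of coordinates, so the admissible left blocks and the admissible right blocks are each identified in $O(n)$ time and number $O(n)$.

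Finally, for each of the $O(n^2)$ pairs $(i,j)$ consisting of an admissible left block and an admissible right block, I would verify the remaining requirements of Theorem~\ref{theorem_adjacency}: that $x^{0,1}$ and $y^{0,1}$ agree on the central range $[i_a,j_b]$, and that the case-dependent disagreement conditions, selected according to the values of $x^{0,1}_i$ and $x^{0,1}_j$, are satisfied. Each of these is a comparison of fixed fragments of the encodings, settled by a single linear scan in $O(n)$ time. The two vertices are nonadjacent if and only if some pair $(i,j)$ passes all tests, and otherwise they are adjacent; hence adjacency is decided in $O(n^3)$ time overall.

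The only point requiring care is the faithful translation of the double-block conventions, namely the offsets $i_a=i+2$, $j_b=j-2$ and the wavy-line entries that may or may not carry a step-back, into the index ranges of the scans; this is a bookkeeping matter rather than a genuine obstacle. The stated polynomial bound is already immediate, and it could be sharpened further by observing that the central-agreement test prunes the vast majority of candidate pairs, but no such refinement is needed for the claim.
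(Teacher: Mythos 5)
Your proof is correct and follows essentially the same route as the paper: enumerate the $O(n)$ candidate left blocks and $O(n)$ candidate right blocks from Theorem~\ref{theorem_adjacency}, then check the remaining encoding conditions for each pair by a linear scan, giving an $O(n^3)$ bound (the paper additionally remarks this can be sharpened to $O(n)$ by a single pass, but that refinement is not needed for polynomiality). One trivial slip: your explicit list of admissible left-block patterns omits $U_{00}$, though your generic reference to the $U$ blocks makes clear this is an oversight in enumeration rather than in the argument.
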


\begin{proof}
	We consider two pyramidal tours with step-backs $x$ and $y$, and the corresponding vertices $x^v$ and $y^v$ of the polytope $\PSB(n)$.
	
	In the encodings $x^{0,1,sb}$ and $y^{0,1,sb}$ there are $O(n)$ possible positions for the left block. Similarly, there are $O(n)$ possible positions for the right block. For each pair of blocks, the verification of the remaining conditions will require a single pass along the vectors $x^{0,1,sb}$ and $y^{0,1,sb}$ that can be performed in time $O(n)$. Thus, the vertex adjacency test by an exhaustive search of all possible cases of the Theorem~\ref{theorem_adjacency} will require at most $O(n^3)$ operations.
	
	In fact, the test can be performed in linear time $O(n)$.
	A single pass through the encodings $x^{0,1,sb}$ and $y^{0,1,sb}$ is enough to consistently find the left block, then the right block, then check the remaining conditions.
\end{proof}

\section{Conclusion}

The general formulation of the traveling salesperson problem and the verification of vertex adjacency in 1-skeleton of the traveling salesperson polytope are NP-complete \cite{Papadimitriou}.
At the same time, the traveling salesperson problem for pyramidal tours and pyramidal tours with step-backs is solvable by dynamic programming in polynomial time \cite{Enomoto,Gilmore}.
We have established that the vertex adjacency in 1-skeleton of the polytope of pyramidal tours \cite {Bondarenko-Nikolaev-2018} and pyramidal tours with step-backs can be verified in polynomial time.
Thus, the properties of 1-skeleton of the traveling salesperson polytope are directly related to the properties of the problem itself.

\subsubsection*{Acknowledgments.} The research is supported by the grant of the President of the Russian Federation MK-2620.2018.1.

%
%
%

\begin{thebibliography}{8}
	\bibitem {Aguilera}
Aguilera, N.E., Katz, R.D., Tolomei, P.B.: 
Vertex adjacencies in the set covering polyhedron. 
{\em Discrete Appl. Math.} {\bf 218}, 40--56 (2017).
\doi{10.1016/j.dam.2016.10.024}.


\bibitem{Arthanari}
Arthanari, T.S.:
Study of the pedigree polytope and a sufficiency condition for nonadjacency in the tour polytope.
{\em Discrete Optim.} {\bf 10} (3), 224-232 (2013).
\doi{10.1016/j.disopt.2013.07.001}.


\bibitem {Balinski}
Balinski, M.L.: 
Signature methods for the assignment problem.
{\em Oper. Res.} {\bf 33} (3), 527--536 (1985).
\doi{10.1287/opre.33.3.527}.


\bibitem{Bondarenko-Maksimenko}
Bondarenko, V.A., Maksimenko, A.N.:
Geometricheskie konstruktsii i slozhnost' v kombinatornoy optimizatsii (Geometric constructions and complexity in combinatorial optimization), LKI, Moscow, 2008 [Russian].


\bibitem{Bondarenko-Nikolaev-2016}
Bondarenko, V.A., Nikolaev, A.V.:
On graphs of the cone decompositions for the min-cut and max-cut problems,
{\em Int. J. Math. Sci.,} {\bf 2016}, Article ID 7863650, 6~p. (2016).
\doi{10.1155/2016/7863650}.


\bibitem{Bondarenko-Nikolaev-2017}
Bondarenko, V.A., Nikolaev, A.V.:
Some properties of the skeleton of the pyramidal tours polytope.
{\em Electron. Notes Discrete Math.} {\bf 61}, 131--137 (2017).
\doi{10.1016/j.endm.2017.06.030}.


\bibitem {Bondarenko-Nikolaev-2018}
Bondarenko,\,V.A., Nikolaev,\,A.V.:
On the skeleton of the polytope of pyramidal tours.
{\em J. Appl. Ind. Math.} {\bf 12}, 9--18 (2018).
\doi{10.1134/S1990478918010027}.


\bibitem {Bondarenko-Shovgenov-ST}
Bondarenko V.A., Nikolaev A.V., Shovgenov D.A.:
1-skeletons of the spanning tree problems with additional constraints. 
{\em Autom. Control Comp. S.} {\bf 51} (7): 682--688 (2017).
\doi{10.3103/s0146411617070033}.


\bibitem {Bondarenko-Shovgenov-BS}
Bondarenko V.A., Nikolaev A.V., Shovgenov D.A.:
Polyhedral characteristics of balanced and unbalanced bipartite subgraph problems.
{\em Autom. Control Comp. S.} {\bf 51} (7): 576–-585 (2017).
\doi{10.3103/s0146411617070276}.




\bibitem {Chegireddy}
Chegireddy, C.R., Hamacher, H.W.: 
Algorithms for finding K-best perfect matchings. 
{\em Discrete Appl. Math.} {\bf 18}, 155--165 (1987).
\doi{10.1016/0166-218X(87)90017-5}


\bibitem {Combarro}
Combarro, E.F., Miranda, P.:
Adjacency on the order polytope with applications to the theory of fuzzy measures.
{\em Fuzzy Set. Syst.} {\bf 161}, 619--641 (2010).
\doi{10.1016/j.fss.2009.05.004}.


\bibitem {Enomoto}
Enomoto, H., Oda, Y., Ota, K.:
Pyramidal tours with step-backs and the asymmetric traveling salesman problem.
{\em Discrete Appl. Math.} {\bf 87}, 57--65 (1998).
\doi{10.1016/S0166-218X(98)00048-1}.


\bibitem {Gabow}
Gabow, H.N.:
Two algorithms for generating weighted spanning trees in order.
{\em SIAM J. Comput.} {\bf 6}, 139--150 (1977).
\doi{10.1137/0206011}.


\bibitem {Gilmore}
Gilmore, P.C., Lawler, E.L., Shmoys, D.B.:
Well-solved special cases. 
In: \textit{The traveling salesman problem: A guided tour of combinatorial optimization, E.\,Lawler, J.\,K.\,Lenstra, A.\,Rinnooy Kan, and D.\,Shmoys, eds., John Wiley}, 87--143 (1985).


\bibitem {Grotschel}
Gr\"{o}tschel, M.; Padberg, M.:
Polyhedral theory. In: \textit{The traveling salesman problem: A guided tour of combinatorial optimization, E.\,Lawler, J.\,K.\,Lenstra, A.\,Rinnooy Kan, and D.\,Shmoys, eds., John Wiley}, 251--305 (1985).


\bibitem {Khachay}
Khachay, M. Neznakhina, K.:
Generalized pyramidal tours for the generalized traveling salesman problem.
In: {\em Gao X., Du H., Han M. (eds) Combinatorial Optimization and Applications. COCOA 2017. LNCS}, {\bf10627}, 265--277 (2017).
\doi{10.1007/978-3-319-71150-8\_23}.


\bibitem{Matsui}
Matsui T.:
NP-completeness of non-adjacency relations on some 0-1 polytopes.
In: {\em Proceedings of ISORA’95, Lecture Notes in Operations Research} {\bf 1}, 249--258 (1995).


\bibitem {Oda}
Oda, Y.:
An asymmetric analogue of van der Veen conditions and the traveling salesman problem.
{\em Discrete Appl. Math.} {\bf 109}, 279--292 (2001).
\doi{10.1016/S0166-218X(00)00273-0}.


\bibitem {Papadimitriou}
Papadimitriou, C.H.:
The adjacency relation on the traveling salesman polytope is NP-Complete,
{\em Math. Program.} {\bf 14}, 312--324 (1978).
\doi{10.1007/BF01588973}. 


\bibitem{Rao}
Rao, M.R.:
Adjacency of the traveling salesman tours and 0-1 vertices.
{\em SIAM J. Appl. Math.} {\bf 30}, 191--198 (1976).
\doi{10.1137/0130021}.


\bibitem{Sierksma1993}
Sierksma, G.:
The skeleton of the symmetric traveling salesman polytope.
{\em Discrete Appl. Math.} {\bf 43}, 63--74 (1993).
\doi{10.1016/0166-218X(93)90169-O}.


\bibitem{Sierksma1995}
Sierksma G., Teunter R.H., Tijssen G.A.:
Faces of diameter two on the Hamiltonian cycle polytype.
{\em Oper. Res. Lett.} {\bf 18} (2), 59--64 (1995).
\doi{10.1016/0167-6377(95)00035-6}.


\bibitem{Simanchev}
Simanchev, R.Yu.:
On the vertex adjacency in a polytope of connected k-factors.
{\em Trudy Inst. Mat. i Mekh. UrO RAN} {\bf 24} (2), 235--242 (2018).
\doi{10.21538/0134-4889-2018-24-2-235-242}.
\end{thebibliography}
%

\end{document}